\def\theequation{\thesection.\@arabic \c@equation}
\def\theenumi{\@roman\c@enumi}
\def\@citecolor{blue}
\def\@urlcolor{blue}
\def\@linkcolor{blue}
\newcounter{claim}
   {\refstepcounter{claim}\def\@currentlabel{\thethm\alph{claim}}%
   \underline{Claim \@currentlabel}:\itshape}{}
\theoremstyle{plain}
\newtheorem{thm}[equation]{Theorem}
\newtheorem{lemma}[equation]{Lemma}
\newtheorem{propn}[equation]{Proposition}
\theoremstyle{definition}
\newtheorem{defn}[equation]{Definition}
\newtheorem{question}[equation]{Question}
\theoremstyle{remark}
\newtheorem{obs}[equation]{Observation}
\newtheorem{remark}[equation]{Remark}
\newenvironment{remarkbox}[1][]{%
    \begin{remark}[#1] \pushQED{\qed}}{\popQED \end{remark}}
\newtheorem{example}[equation]{Example}
\newenvironment{examplebox}[1][]{%
    \begin{example}[#1] \pushQED{\qed}}{\popQED \end{example}}
\newtheorem{definition}[equation]{Definition}
\newenvironment{definitionbox}[1][]{%
    \begin{definition}[#1]\pushQED{\qed}}{\popQED \end{definition}}
\newtheorem{notation}[equation]{Notation}
\newtheorem{discussion}[equation]{Discussion}
\newenvironment{discussionbox}[1][]{%
    \begin{discussion}[#1]\pushQED{\qed}}{\popQED \end{discussion}}
\newtheorem{construction}[equation]{Construction}
\newcommand{\naturals}{\mathbb{N}}
\newcommand{\ints}{\mathbb{Z}}
\newcommand{\complex}{\mathbb{C}}
\DeclareMathOperator{\Tor}{Tor}
\DeclareMathOperator{\In}{in}
\DeclareMathOperator{\hilbertPoset}{\mathcal H}
\DeclareMathOperator{\idealPoset}{\mathcal I}
\DeclareMathOperator{\monomials}{Mon}
\newcommand{\define}[1]{\emph{#1}}
\newcommand{\minus}{\ensuremath{\smallsetminus}}
\newcommand{\lex}{\ensuremath{\text{\textsc{lex}}}\xspace}
\def\hssymb{\mathfrak z}
\title{Poset Embeddings of Hilbert Functions}
\author[G.~Caviglia]{Giulio Caviglia}
\address{Department of Mathematics, Purdue University, West Lafayette IN
47907}
\email{gcavigli@math.purdue.edu}
\author[M.~Kummini]{Manoj Kummini}
\address{Department of Mathematics, Purdue University, West Lafayette IN
47907}
\curraddr{Chennai Mathematical Institute, Siruseri, Tamilnadu 603103.
India.}
\email{mkummini@cmi.ac.in}
\begin{document}

\begin{abstract}
For a standard graded algebra $R$,
 we consider embeddings of the poset
of Hilbert functions of $R$-ideals into the poset of $R$-ideals,
as a way of classification of Hilbert functions. There are examples of
rings for which such embeddings do not exist. We describe how the embedding
can be lifted to certain ring extensions, which is then used in the case of
polarization and distraction. A version of a theorem of
Clements--Lindstr\"om is proved. We exhibit a condition on the embedding
that ensures that the classification of Hilbert functions is obtained with
images of lexicographic segment ideals.
\end{abstract}

\maketitle

\section{Introduction}
Let $R$ be a \define{standard graded algebra} over a field $\Bbbk$,
i.e., $R \simeq \bigoplus_{d \geq 0}R_d$ as $\Bbbk$-vector-spaces
with $R_0 = \Bbbk$, $R = \Bbbk[R_1]$ and $\dim_\Bbbk R_1 < \infty$. When
$R$ is a polynomial ring, a theorem of F.~Macaulay (see,
e.g.,~\cite{BrHe:CM}*{Section~4.2}) 
provides a classification of
the Hilbert functions of homogeneous $R$-ideals; more precisely, 
a function $H : \naturals \to \naturals$ is the Hilbert function of some
homogeneous ideal if and only if it is the Hilbert function of an ideal
generated by \lex-segments.
(\lex denotes the graded lexicographic monomial order on a polynomial ring.)
\lex-segment ideals in polynomial
rings have been extensively studied. It is known that such ideals have
several extremal properties;
see~\cites{BigaUpperBds93, HuleMaxBettiNos93, PardueDefClass96,
SbarraBdsLocalCoh01, ConcaExtremalGinLex04, CHHRigidReslns04}.

Let $A = \Bbbk[x_1, \ldots, x_n]$ be a polynomial ring, $\mathfrak a =
(x_1^{e_1}, \ldots, x_n^{e_n})$ and $R = A/\mathfrak a$.
J.~Kruskal~\cite{KruskalNoOfSimpl63} and G.~Katona~\cite{KatonaFiniteSets68}
(for the case $e_1 = \cdots = e_n = 2$) and G.~Clements and
B.~Lindstr\"om~\cite{ClemLindMacaulayThm69} (more generally, for all $2
\leq e_1 \leq \cdots \leq e_n \leq \infty$) proved that every homogeneous
$R$-ideal has the same Hilbert function as the image (in $R$)
of a $\lex$-segment $A$-ideal. The following conjecture of D.~Eisenbud,
M.~Green and J.~Harris furthered the interest in studying images of
\lex-segment ideals in quotient rings: let $A$ be the polynomial ring, as
above, and let $I$ be a homogeneous $A$-ideal containing an
$A$-regular sequence
of homogeneous polynomials $f_1, \ldots, f_n$ of degrees $e_1 \leq \cdots
\leq e_n$; then there exists a \lex-segment ideal $L$ such that the
Hilbert functions of $L + (x_1^{e_1}, \ldots, x_n^{e_n})$ and $I$ are
identical. (See~\cite{EGHcbconj96}*{Conjecture~CB12} for the original
formulation, and \cite{FrRiLPP07} for a more recent survey.) In a similar
vein, V.~Gasharov, N.~Horwitz, J.~Mermin, S.~Murai and I.~Peeva
studied algebras $R = A/\mathfrak a$ (where $\mathfrak a$ is graded
$A$-ideal) for
which every possible Hilbert function is attained by the images (in $R$) of
\lex-segment $A$-ideals: quotients by compressed-monomial-plus-powers
ideals~\cite{MerminPeevaLexifying}, rational normal
curves~\cite{GHPtoric08}, Veronese rings~\cite{GMPveronese10} and quotients
by coloured square-free monomial ideals~\cite{MeMuColoured10}. In these
papers, such rings are called Macaulay-lex, to emphasize the fact that
every Hilbert function is attained by the image of a \lex-segment ideal,
analogous to the theorem of Macaulay. Mermin, however, showed that most
monomial complete intersections fail to be Macaulay-lex, even after a
reordering of variables~\cite{MerminMonomCI10}*{Theorem~4.4}. 

Motivated by these results, we consider two problems: 
\begin{asparaenum}
\item Is there another approach to classification of Hilbert functions in
quotient rings?
\item What is the significance of \lex-segment ideals?
\end{asparaenum}
To study this, we look at certain embeddings of the poset of Hilbert
functions into the poset of $R$-ideals. When such embeddings exist, they
induce a filtration of the $R_d$ by $\Bbbk$-subspaces; for some $R$,
this results in a degree-wise total order (which we call an
\define{embedding order}) on a standard basis of $R$.  Further, the
embedding order respects multiplication precisely when all 
the Hilbert functions are given by images of \lex-segment ideals.
We now describe this approach in detail.

By $\naturals$ we mean the set of 
non-negative integers.
Let $\idealPoset_R = \{J : J \ \text{is a homogeneous $R$-ideal}\}$,
considered as a poset under inclusion. 
{For $I \in \idealPoset_R$ and $t \in \naturals$, 
we will write $I_t$ for the $\Bbbk$-vector-space of
the homogeneous elements of $I$ of degree $t$. 
(Note that $I \simeq \bigoplus_{t\in \mathbb N} I_t$, as
$\Bbbk$-vector-spaces.) 
The \define{Hilbert function} of $I$ is the function 
$\naturals \to \naturals, t \mapsto \dim_\Bbbk I_t$ 
the \define{Hilbert series} of
$I$ is the formal power series } 
\[
H_I(\hssymb) = \sum_{t \in \naturals} \left(\dim_\Bbbk I_t\right) \hssymb^t \in
\ints[[\hssymb]].
\]
 
For $H \in \ints[[\hssymb]]$, we write $H^t$ for the
coefficient of $\hssymb^t$ in $H$, so $H = \sum_t H^t \hssymb^t$. The
\define{poset of Hilbert series of $R$-ideals} is the set
$\hilbertPoset_R = \{H_{J} : J \in \idealPoset_R\}$ endowed with the
partial order: $H \succcurlyeq H' \in \hilbertPoset_R$ if,
for all $t \in \naturals$, $H^t \geq (H')^t$.
For the sake of convenience, we work with $\hilbertPoset_R$ instead of the 
analogous poset of Hilbert functions of $R$-ideals.
\begin{question} \label{qn:isThereAnEmbedding} 
Is there an (order-preserving) embedding $\epsilon : \hilbertPoset_R
\hookrightarrow \idealPoset_R$, as posets, such that 
${\mathbf H} \circ \epsilon = \mathrm{id}_{\hilbertPoset_R}$, 
where ${\mathbf H}:\idealPoset_R \longrightarrow
\hilbertPoset_R$ is the function $J \mapsto H_{J}$?
\end{question}

We will say that $\hilbertPoset_R$ \define{admits an embedding into}
$\idealPoset_R$ (and often, by abuse of terminology, merely that
$\hilbertPoset_R$ \define{admits an embedding}) if
Question~\ref{qn:isThereAnEmbedding} has an affirmative answer. 
For example, if every Hilbert series in $\hilbertPoset_R$ is attained by
the image of a \lex-segment $A$-ideal, then 
$\hilbertPoset_R$ admits an embedding, as is the case when $R=A$
(Macaulay) or $R = A/(x_1^{e_1}, \ldots, x_n^{e_n})$ (Kruskal, Katona,
Clements--Lindstr\"om) or $R$ is belongs to one of the classes of examples
studied by Gasharov, Horwitz, Mermin, Murai and Peeva, listed earlier.
In Proposition~\ref{propn:embHCompl}, we prove a necessary condition for 
$\hilbertPoset_R$ to admit an embedding, using which we exhibit a few
algebras whose posets of Hilbert series fail to admit embeddings. 
Proposition~\ref{thm:multOrderIsLex} describes when the ideals that appear
as images of a given embedding are \lex-segment $R$-ideals.  In
Section~\ref{sec:extnRings}, we prove that an embedding can be lifted to
certain ring extensions (Theorem~\ref{thm:extnRings}), and use it for a
special case of distractions (Proposition~\ref{thm:distrEmb}) and for
polarization (Theorem~\ref{thm:polzEmb}).
Theorem~\ref{thm:ClemLindEmb} is an analogue, in the situation of
embeddings, of the result of Clements--Lindstr\"om mentioned above. In
Section~\ref{sec:stabilization}, we prove some lemmas about obtaining
stable ideals, which can be read independently of the previous sections,
and are used in Section~\ref{sec:extnRings}.

\subsection*{Notation and terminology}
We use~\cite{eiscommalg} as a general reference.  In this paper, $R$ will
always denote a standard graded algebra over a field $\Bbbk$. The
homogeneous maximal ideal of $R$
is $\mathfrak m = \bigoplus_{d \geq 1}R_d$. By $A$, we mean a (standard
graded) polynomial ring over $\Bbbk$ that has a surjective homogeneous
$\Bbbk$-algebra homomorphism $A \stackrel{\phi}{\longrightarrow} R$ of degree
$0$. We fix this homomorphism.  Let $\mathfrak a =\ker\phi$. We will
further assume that the embedding dimensions of $A$ and $R$ are the same;
equivalently, $\mathfrak a_1 = 0$.  In particular, if $R$ is a polynomial
ring, then $A = R$.

\begin{definitionbox}
(The following definitions depend on the choice of basis of $A_1$.) Fix a
basis $x_1, \ldots, x_n$ of $A_1$.
We write $\monomials(A)$ for the set of monomials in the $x_i$ in $A$. 
By a \define{monomial} of $R$, we mean the image of a monomial of $A$
under $\phi$. 
A \define{standard basis} of $R$ is a set $\mathcal B \subseteq
\monomials(A)$ such that $\{\phi(f) : f \in \mathcal B\}$ 
forms a $\Bbbk$-basis of $R$. Let $\mathcal B$ be a standard basis of $R$;
for $d \in \naturals$, we write $\mathcal B_d$ for the set of monomials of
$\mathcal B$ of degree $d$. For a $\Bbbk$-subspace $V$ of $R_d$, we write
$|V|$ for $\dim_\Bbbk V$. For a subset $V$ of $R$, we write $(V)R$ for the
ideal generated by $V$ in the ring $R$. 
A \define{graded total order} on $R$ is a pair $(\mathcal B, \tau)$ consisting
of a standard basis $\mathcal B$ of $R$ and a total order $\tau$ on
$\mathcal B$ such that $m \prec_\tau m'$ if $\deg m < \deg m'$.  For $d \in
\naturals$, the \define{$\tau$-segment of $R_d$ of dimension} $r$ is
the $\Bbbk$-vector-space generated by the images in $R$ of 
the first $r$ monomials in $\mathcal
B_d$ in the order $\prec_\tau$. 
We say that a graded total order $(\mathcal B,\tau)$ is a
\define{monomial order} if 
\begin{inparaenum}
\item 
for all $f \in \mathcal B$ and for all $g \mid f$, $\frac{f}g
\in \mathcal B$, and,
\item for all $f, f' \in \mathcal B$ and for all 
$g \mid \gcd(f, f')$, $f \prec_\tau f'$ if and only if 
 $\frac{f}g \prec_\tau \frac{f'}g$.
\end{inparaenum}
Suppose that $\mathfrak a$ is a monomial ideal.
Then $\mathcal B = \monomials(A)
\minus \mathfrak a$;
therefore, while referring to any graded total
order, we will drop the reference to the standard basis. 
On the polynomial ring $A$, we will also need to use \define{weight orders}
on the set of monomials, induced by assigning weights inside
$\naturals$ to the $x_i$. Weight orders need not be total orders, in
general.
\end{definitionbox}

\begin{remarkbox}
\label{remarkbox:noRefToBasis}
Note that whether $\hilbertPoset_R$ admits an embedding or not does not
depend on any choice of basis of $R_1$. As we will see in
Proposition~\ref{propn:embIFFfiltr}, the existence of an embedding is
equivalent to a the existence of a certain filtration of $R$ as a
$\Bbbk$-vector-space. However, in Discussion~\ref{discussion:embedOrders}
and what follows, we consider total orders on 
rings defined by monomial ideals and
on certain semigroup rings that correspond to embeddings; in those cases, we
have the basis of $R_1$ given by the images of the variables of $A$ in
mind.
\end{remarkbox}

\begin{remarkbox}
\label{remark:embImMonom}
Suppose that $R$ is defined by a monomial ideal or that it is 
an affine semigroup algebra
all of whose $\Bbbk$-algebra generators are of the same degree (i.e., there is a
injective $\Bbbk$-algebra homomorphism $\xi : R \longrightarrow A'$ for
some polynomial ring $A'$ such that the $\xi(\phi(x_i))$ are monomials in
$A'$, in some fixed basis of $A'_1$, of the same degree). 
If $\hilbertPoset_R$ admits an embedding $\epsilon$, then for all $H \in
\hilbertPoset_R$, we may take $\epsilon(H)$ to be a monomial ideal. To see
this, write $R = A/\mathfrak a$.  We can find a weight order $\omega$ on
$A$ such that $\mathfrak a = \In_\omega(\mathfrak a)$ (the initial ideal of
$\mathfrak a$ with respect to the weight order $\omega$)
and such that for every homogeneous ideal $I$ with
$\mathfrak a \subseteq I$, $\In_\omega(I)$ is generated by $\mathfrak a$
and monomials.  When $R$ is defined by a monomial ideal, this is immediate.
For the details of the latter case, see~\cite{GHPtoric08}*{Theorem~2.5}.
\end{remarkbox}

\section{Generalities}
\label{sec:generalities}

The poset $\idealPoset_R$ of $R$-ideals is a \define{lattice};
if $I, J \in \idealPoset_R$, then their \define{join}, or
least upper bound, is $I \vee J = I + J$ and their \define{meet}, or
greatest lower bound, is $I \wedge J = I \cap J$. (We
use~\cite{StanEC1} as the reference on lattices.) We will see that
if $\hilbertPoset_R$ admits an embedding, then it is a lattice with 
specific meet and join functions. Using this criterion, we show that for
certain standard graded algebras $R$, $\hilbertPoset_R$ admits no
embedding. First, we see in the next lemma that an embedding 
(if it exists) can be done degree-by-degree.

\begin{lemma}
\label{lemma:embHbyDegs}
Suppose that $\hilbertPoset_R$ admits an embedding
$\epsilon$. 
Let $H, \tilde H \in \hilbertPoset_R$. Then for all $d \in \naturals$, if
$H^d \leq \tilde H^d$ then 
$(\epsilon(H))_d \subseteq (\epsilon(\tilde H))_d$. 
\end{lemma}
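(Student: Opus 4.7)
The goal is to show $I_d \subseteq \tilde I_d$, where $I = \epsilon(H)$ and $\tilde I = \epsilon(\tilde H)$. The plan is to produce, inside $\epsilon(\hilbertPoset_R)$, an ideal whose degree-$d$ piece equals $I_d$ but whose full Hilbert series depends only on the integer $H^d$. The natural candidate is $(I_d)R + \mathfrak m^{d+1}$, whose Hilbert series $\alpha_d(H)$ takes the value $0$ in degrees $< d$, $H^d$ in degree $d$, and $\dim_\Bbbk R_t$ in degrees $t > d$. Since $\alpha_d(H)$ depends only on $H^d$, the hypothesis $H^d \leq \tilde H^d$ gives $\alpha_d(H) \preceq \alpha_d(\tilde H)$ componentwise (and both lie in $\hilbertPoset_R$), and then order-preservation of $\epsilon$ yields $(\epsilon(\alpha_d(H)))_d \subseteq (\epsilon(\alpha_d(\tilde H)))_d$. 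Everything thus reduces to the identification $\epsilon(\alpha_d(H)) = (I_d)R + \mathfrak m^{d+1}$, together with its analogue for $\tilde H$.

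To make that identification, I would first establish $\epsilon(H_{I + \mathfrak m^{d+1}}) = I + \mathfrak m^{d+1}$ and $\epsilon(H_{\mathfrak m^d}) = \mathfrak m^d$. For the first, $H \preceq H_{I + \mathfrak m^{d+1}}$ forces $\epsilon(H_{I + \mathfrak m^{d+1}}) \supseteq I$ by order-preservation; the degree-$t$ component of $H_{I + \mathfrak m^{d+1}}$ equals $\dim_\Bbbk R_t$ for $t > d$, pinning $\epsilon(H_{I + \mathfrak m^{d+1}})_t$ to $R_t$ there, while for $t \leq d$ the containment together with matching dimensions forces equality with $I_t$. The identification $\epsilon(H_{\mathfrak m^d}) = \mathfrak m^d$ follows from an even simpler dimension count: its Hilbert series forces $R_t$ in degrees $\geq d$ and $0$ below.

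Finally, since $\alpha_d(H) \preceq H_{I + \mathfrak m^{d+1}}$ and $\alpha_d(H) \preceq H_{\mathfrak m^d}$ componentwise, order-preservation of $\epsilon$ places $\epsilon(\alpha_d(H))$ inside both $I + \mathfrak m^{d+1}$ and $\mathfrak m^d$, hence inside their intersection; a degree-by-degree computation identifies this intersection as $(I_d)R + \mathfrak m^{d+1}$, and since $\epsilon(\alpha_d(H))$ and $(I_d)R + \mathfrak m^{d+1}$ have the same Hilbert series, they coincide. The step requiring the most care is the auxiliary identification $\epsilon(H_{I + \mathfrak m^{d+1}}) = I + \mathfrak m^{d+1}$, where Hilbert-series bookkeeping has to be combined with order-preservation to force the chosen embedded ideal to be exactly $I + \mathfrak m^{d+1}$ (rather than some other ideal with the same Hilbert series); once that is in hand, the rest is a squeezing argument using the componentwise order.
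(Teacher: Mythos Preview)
Your proof is correct and follows essentially the same route as the paper, built around the same auxiliary ideals $J = I + \mathfrak m^{d+1}$ and $K = (I_d)R + \mathfrak m^{d+1}$ (your $\alpha_d(H)$ is precisely $H_K$). The paper's version is slightly more economical: rather than identifying $\epsilon(H_J)$ and $\epsilon(H_K)$ explicitly, it observes directly that the inclusions $\epsilon(H_I) \subseteq \epsilon(H_J) \supseteq \epsilon(H_K)$ together with $H_I^d = H_J^d = H_K^d$ force $(\epsilon(H_I))_d = (\epsilon(H_J))_d = (\epsilon(H_K))_d$, which bypasses your auxiliary identification $\epsilon(H_{\mathfrak m^d}) = \mathfrak m^d$.
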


\begin{proof}
Let $I$ be an $R$-ideal such that $H_I = H$. Define $J$ to be the $R$-ideal 
generated by $I$ and all the forms of degree $d+1$, and let $K$ be the ideal
generated by all the forms of $J$ of degree greater than or equal to $d$.
The fact that $H_J \succcurlyeq H_I$ and $H_J \succcurlyeq H_K$ gives
$\epsilon(H_J) \supseteq \epsilon(H_I)$ and $\epsilon(H_J) \supseteq
\epsilon(H_K),$ while the equalities $H_I^d=H_J^d=H_K^d$ imply that
$(\epsilon(H_I))_d=(\epsilon(H_J))_d= (\epsilon(H_K))_d$. 
Let $I'$ be an $R$-ideal such that $H_{I'} = \tilde H$, 
and define the ideals
$J'$ and $K'$ in a way analogous to above. Since  $H_{K'}
\succcurlyeq  H_K$ we get the desired inequality.
\end{proof}

\begin{remark}
\label{remark:embByDegs}
For a $\Bbbk$-vector-space $V \subseteq R_d$, we write $\epsilon(V)$ for
$(\epsilon(H_{(V)R}))_d$. (This depends only on $|V|$ by
Lemma~\ref{lemma:embHbyDegs}.) Hence for all $I \in \idealPoset_R$,
$\epsilon(H_I) = \bigoplus_{d \in \naturals}\epsilon(I_d)$.
\end{remark}

\begin{defn}
\label{defn:embeddingFlag}
An \define{embedding filtration} of $R$ is a collection of filtrations $\{0 =
V_{d,0} \subsetneq V_{d,1} \subsetneq \cdots \subsetneq V_{d,|R_d|} = R_d :
d \in \naturals\}$ of $R$ into $\Bbbk$-vector-spaces that satisfies, for
all $d \in \naturals$ and for all $0 \leq r \leq |R_d|$,
\begin{asparaenum}
\item $R_1V_{d,r} = V_{d+1,s}$ for some $0 \leq s \leq |R_{d+1}|$, and,
\item for all $W \subseteq R_d$, $|R_1V_{d,|W|}| \leq |R_1W|$.
\end{asparaenum}
\end{defn}

\begin{propn}
\label{propn:embIFFfiltr}
Let $R$ be a standard graded algebra. Then $\hilbertPoset_R$ admits an
embedding into $\idealPoset_R$ if and only if $R$ has an embedding
filtration.
\end{propn}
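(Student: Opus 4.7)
The plan is to establish the equivalence by constructing one side from the other in the natural way: given the embedding $\epsilon$, read off its image in each degree to form the filtration, and, conversely, given the filtration, reassemble its pieces into the values of $\epsilon$. The substance lies in verifying the two filtration axioms and the ideal property respectively, both of which are driven by a single auxiliary construction.

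\textbf{Forward direction.} Suppose $\hilbertPoset_R$ admits an embedding $\epsilon$. For each $d \in \naturals$ and $0 \leq r \leq |R_d|$, set $V_{d,r} := (\epsilon(H_I))_d$ for any $R$-ideal $I$ with $|I_d| = r$; such an $I$ exists (e.g.\ take any $r$-dimensional $W \subseteq R_d$ and let $I_d = W$, $I_e = R_e$ for $e > d$, $I_e = 0$ otherwise), and by Lemma~\ref{lemma:embHbyDegs} (equivalently, Remark~\ref{remark:embByDegs}) the result is independent of $I$. The same lemma gives a strict chain $V_{d,0} \subsetneq \cdots \subsetneq V_{d,|R_d|} = R_d$. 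To verify the filtration axioms, associate to each subspace $W \subseteq R_d$ the $R$-ideal $J(W) := (W)R + \mathfrak m^{d+2}$; by construction its Hilbert series takes the values $|W|$ in degree $d$ and $|R_1 W|$ in degree $d+1$, so $(\epsilon(H_{J(W)}))_d = V_{d,|W|}$ and $(\epsilon(H_{J(W)}))_{d+1} = V_{d+1,|R_1 W|}$. Since $\epsilon(H_{J(W)})$ is itself an ideal,
\[
R_1 V_{d,|W|} \;\subseteq\; V_{d+1,\,|R_1 W|}.
\]
Taking dimensions with $W$ arbitrary yields axiom (ii); specializing to $W = V_{d,r}$ makes both sides of the display have dimension $|R_1 V_{d,r}|$, so the inclusion is forced to be an equality, giving axiom (i) with $s = |R_1 V_{d,r}|$.

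\textbf{Reverse direction.} Given an embedding filtration $\{V_{d,r}\}$, define $\epsilon(H) := \bigoplus_{d \in \naturals} V_{d, H^d}$ for $H \in \hilbertPoset_R$. The identity $\mathbf H \circ \epsilon = \mathrm{id}$ is immediate from $|V_{d,r}| = r$, and order-preservation is immediate from the nested chain. The only non-formal point is that $\epsilon(H)$ is an $R$-ideal, i.e.\ $R_1 V_{d, H^d} \subseteq V_{d+1, H^{d+1}}$. Choose any $I \in \idealPoset_R$ with $H_I = H$. Axiom (ii) applied to $W = I_d$ gives $|R_1 V_{d, H^d}| \leq |R_1 I_d| \leq |I_{d+1}| = H^{d+1}$, while axiom (i) says $R_1 V_{d, H^d} = V_{d+1, s}$ for some $s$; together these force $s \leq H^{d+1}$, which delivers the required inclusion.

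I expect the forward direction to be the main obstacle: the filtration axioms are designed precisely to encode the $R_1$-compatibility of $\epsilon$, and extracting axiom (ii) requires the auxiliary ideal $J(W)$ as witness. Once that construction is in hand, both filtration axioms fall out of the same display, and the reverse direction reduces to bookkeeping on top of the two axioms.
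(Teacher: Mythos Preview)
Your proof is correct and follows essentially the same approach as the paper. In the forward direction the paper uses the witness ideal $(W)R$ rather than your $(W)R + \mathfrak m^{d+2}$, but these agree in degrees $d$ and $d+1$ so the argument is identical; in the reverse direction the paper simply asserts that $\epsilon(H)$ is an ideal ``from the definition,'' whereas you spell out the two-line verification via axioms (i) and (ii), which is a welcome addition.
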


\begin{proof}
Suppose that $\hilbertPoset_R$ admits an embedding $\epsilon$. 
Let $\mathcal V = 
\{0 = V_{d,0} \subsetneq V_{d,1} \subsetneq \cdots \subsetneq
V_{d,|R_d|} = R_d : d \in \naturals\}$ be a collection of filtrations of
$R$ into $\Bbbk$-vector-spaces. 
For all $d$ and for all $0 \leq r \leq |R_d|$, 
replace $V_{d,r}$ by $\epsilon(V_{d,r})$.
We will show that $\mathcal V$ is an embedding filtration of $R$.

Let $W \subseteq R_d$ be a $\Bbbk$-subspace.  Let $V = V_{d,|W|}$. 
By Lemma~\ref{lemma:embHbyDegs}, 
$V = (\epsilon(H_{(W)R}))_d$, which gives 
$R_1V \subseteq (\epsilon(H_{(W)R}))_{d+1}$.
Note that $|(\epsilon(H_{(W)R}))_{d+1}| = |R_1W|$,
so, $|R_1V| \leq |R_1W|$. 
Now, applying the above calculation with $W = V$, we get
$R_1V \subseteq (\epsilon(H_{(V)R}))_{d+1}$.
By Remark~\ref{remark:embByDegs}, 
$\epsilon(H_{(V)R})_{d} = V$ and
$\epsilon(H_{(V)R})_{d+1} = \epsilon(R_1V)$. Hence
$R_1V \subseteq \epsilon(R_1V)$, so $R_1V = \epsilon(R_1V) = V_{d+1,
|R_1V|}$. 

Conversely, given an embedding filtration $\{0 = V_{d,0} \subsetneq V_{d,1}
\subsetneq \cdots \subsetneq V_{d,|R_d|} = R_d : d \in \naturals\}$, we
define an embedding $\epsilon : \hilbertPoset_R \longrightarrow
\idealPoset_R$ by setting $\epsilon(H) = \bigoplus_{d \in \naturals} V_{d,
H^d}$. It follows, from the definition of embedding filtrations, that
$\epsilon(H)$ is an ideal.
\end{proof}

\begin{remarkbox}
\label{remark:zerothBetti} 
For a graded $R$-module $M$, its \define{graded Betti numbers} are
$\beta_{i,j}^R(M) = \dim_\Bbbk \Tor_i^R(M, \Bbbk)_j$.
Let $\epsilon : \hilbertPoset_R \longrightarrow \idealPoset_R$ be an
embedding. Let $I$ be an $R$-ideal. 
Then there is an
inequality 
$\beta_{1,j}^R(R/I) \leq \beta_{1,j}^R(R/\epsilon(H_{I}))$.
We see this as follows. Note that for any homogeneous
$R$-ideal $J$, $\beta_{1,j}^R(R/J) =  \left(|J_j| - |R_1J_{j-1}|\right)$.
Now let $J = \epsilon(H_{I})$. Let
$\{0 = V_{d,0} \subsetneq V_{d,1} \subsetneq \cdots \subsetneq V_{d,|R_d|}
= R_d : d \in \naturals\}$ be the embedding filtration of $R$ given by
$\epsilon$, as in the proof of Proposition~\ref{propn:embIFFfiltr}. Then,
for all $d \geq 1$, $J_d = V_{d, |I_d|}$. Hence $|R_1J_{d-1}| \leq
|R_1I_{d-1}|$, which implies that $\beta_{1,j}^R(R/I) \leq
\beta_{1,j}^R(R/J)$.
Note, also, that the same argument shows that 
$\beta_{1,j}^R(R/\epsilon(H_{I}))$ depends only on $H_I$
and not on $\epsilon$.
For which $i$ and $j$ is the inequality  
$\beta_{i,j}^R(R/I) \leq \beta_{i,j}^R(R/\epsilon(H_{I}))$ true?
Mermin and Murai~\cite{MeMuColoured10}*{Proposition~3.2}
showed that in general 
$\beta_{i,j}^A(R/I) \not \leq \beta_{i,j}^A(R/\epsilon(H_{I}))$.
\end{remarkbox}
\begin{defn}
Let $H, \tilde H \in \ints[[\hssymb]]$. Define $\max(H, \tilde H)$ and 
$\min(H, \tilde H)$  in $\ints[[\hssymb]]$ by setting, 
for all $t \in \naturals$, 
$(\max(H, \tilde H))^t = \max \{H^t, \tilde H^t\}$
and
$(\min(H, \tilde H))^t = \min \{H^t, \tilde H^t\}$.
\end{defn}

Note that the usual total order on $\ints$ makes 
$\ints[[\hssymb]]$ into a distributive lattice. We now derive a necessary
criterion so that we have an embedding.

\begin{propn}
\label{propn:embHCompl}
If $\hilbertPoset_R$ admits an embedding then it is a sublattice of 
$\ints[[\hssymb]]$.
\end{propn}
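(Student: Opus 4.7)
The plan is to prove both closures (under $\max$ and $\min$) by exhibiting, for any $H, \tilde H \in \hilbertPoset_R$, explicit ideals whose Hilbert series realize $\max(H, \tilde H)$ and $\min(H, \tilde H)$. The natural candidates are the sum and intersection of the ideals $I = \epsilon(H)$ and $\tilde I = \epsilon(\tilde H)$, and the whole argument will hinge on the fact that, by Proposition~\ref{propn:embIFFfiltr}, the images of $\epsilon$ in each degree lie in a chain of $\Bbbk$-subspaces.

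First I would fix the embedding filtration $\{V_{d,r}\}$ associated with $\epsilon$, and set $I = \epsilon(H)$, $\tilde I = \epsilon(\tilde H)$. By Remark~\ref{remark:embByDegs}, $I_d = V_{d, H^d}$ and $\tilde I_d = V_{d, \tilde H^d}$ for every $d$. Because $V_{d,0} \subsetneq V_{d,1} \subsetneq \cdots \subsetneq V_{d,|R_d|}$ is a chain, the two subspaces $I_d$ and $\tilde I_d$ are comparable, and
\[
I_d + \tilde I_d = V_{d, \max(H^d, \tilde H^d)}, \qquad I_d \cap \tilde I_d = V_{d, \min(H^d, \tilde H^d)}.
\]

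For the $\max$ case, $I + \tilde I$ is an $R$-ideal whose degree-$d$ component equals $I_d + \tilde I_d$, so the above identity gives $H_{I + \tilde I} = \max(H, \tilde H)$, placing $\max(H, \tilde H)$ in $\hilbertPoset_R$. For the $\min$ case, the same reasoning applied to $I \cap \tilde I$ yields $H_{I \cap \tilde I} = \min(H, \tilde H)$. Taken together, these show $\hilbertPoset_R$ is closed in $\ints[[\hssymb]]$ under both lattice operations, i.e., is a sublattice.

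I do not expect a real obstacle: the nontrivial content has already been packaged into Proposition~\ref{propn:embIFFfiltr} and Lemma~\ref{lemma:embHbyDegs}, which force the degree-$d$ images of $\epsilon$ to sit inside a single chain. The only subtlety to double-check is that the grading behaves well with $+$ and $\cap$ (it does, since both operations on graded ideals commute with taking graded pieces), after which the proof reduces to the one-line chain identity above.
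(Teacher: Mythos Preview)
Your proposal is correct and follows essentially the same approach as the paper: both set $I = \epsilon(H)$, $\tilde I = \epsilon(\tilde H)$, use the embedding filtration to see that $I_d$ and $\tilde I_d$ lie in a chain, and conclude that $I + \tilde I$ and $I \cap \tilde I$ realize $\max(H,\tilde H)$ and $\min(H,\tilde H)$ respectively. Your write-up is in fact more explicit than the paper's, which states the same argument in a more compressed form.
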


\begin{proof}
Let $H, \tilde H \in
\hilbertPoset_R$.
Let $\epsilon : \hilbertPoset_R \longrightarrow \idealPoset_R$ be the
embedding. Let $I, \tilde I \in \idealPoset_R$, $H = H_{I}$ and $\tilde H =
H_{\tilde I}$.  Without loss of generality, we may assume that $I =
\epsilon(H)$ and $\tilde I = \epsilon(\tilde H)$. 
Fix an embedding filtration 
$\{0 = V_{d,0} \subsetneq V_{d,1} \subsetneq \cdots \subsetneq V_{d,|R_d|}
= R_d : d \in \naturals\}$
of $R$.
Write $I = \bigoplus_d V_{d, r_d}$ and $\tilde I = \bigoplus_d V_{d, \tilde {r}_d}$.
Note that $H_{(I+\tilde I)} = H \vee \tilde H$ and $H_{(I \cap \tilde I)} = 
H \wedge \tilde H$.
\end{proof}

\begin{remarkbox}
\label{remark:embInherits}
Suppose that $\hilbertPoset_R$ admits an embedding $\epsilon$. Let 
$I = \epsilon(H)$ for some $H \in \hilbertPoset_R$. 
Then $\idealPoset_{R/I} \simeq \{J
\in \idealPoset_R : I \subseteq J\}$ and 
$\hilbertPoset_{R/I} \simeq \{H_{J} : J \in \idealPoset_R, I \subseteq J\}$. 
In particular, $\epsilon$ induces an embedding of
$\hilbertPoset_{R/I}$ into $\idealPoset_{R/I}$. 
Thus,
if $\mathfrak a$ is a
\lex-segment $A$-ideal then the embedding by \lex-segment $A$-ideals gives
an embedding filtration on $A$; 
the images (in $R$) of \lex-segment $A$-ideals
give an embedding filtration of $R$.
\end{remarkbox}

If $\hilbertPoset_R$ admits an embedding,
then we obtain a complete flag, and hence a basis, of $R_1$. The next
example illustrates this. In the three examples that follow, we have
represented the rings $R$ with a given basis of $R_1$, only for the sake of
concreteness. The assertion that the posets $\hilbertPoset_R$ in those
examples do not admit an embedding is an inherent statement, independent of
the bases.

\begin{examplebox}
Let $R = A/\mathfrak a$ be an Artinian Gorenstein $\Bbbk$-algebra such that
$\mathfrak a$ is generated by quadratic forms in $A$ and $H_R = 1 +
4\hssymb^1 + 4\hssymb^2 + \hssymb^3$. We show that $R$ has an embedding
filtration. In the process of proving that
$R$ is Koszul, A.~Conca, M.~Rossi and G.~Valla observed that there exists a
linear form $l_1 \in R_1$ such that $|R_1l_1| =
2$~\cite{CRVgflags01}*{p.118}, moreover for every other linear form $l$ the inequality $|R_1l| \geq
2$ holds. They further showed (see
\cite{CRVgflags01}*{Lemma~6.14}) 
\begin{inparaenum}
\item that there exists a $2$-dimensional $\Bbbk$-vector-space $V \subseteq R_1$ such that
$l_1 \in V$, and $|R_1V| = 3$, and,
\item for every $2$-dimensional $\Bbbk$-vector-spaces $W\subseteq R_1$ the inequality
$|R_1W|\geq 3$ holds.
\end{inparaenum}
Let $l_2 \in V$ be such that
$l_1, l_2$
form a $\Bbbk$-basis of $V$. Pick $l_3 \not \in V$, if such a linear form
exists, such that $R_1V = R_1V + R_1l_3$; otherwise, let $l_3 \not \in V$
be any linear form. Let $l_4$ be any linear form such that $l_1, \ldots,
l_4$ is a $\Bbbk$-basis for $R_1$. Choose $q_1, \ldots, q_4$ a
$\Bbbk$-basis of $R_1$ such that $\{q_1, q_2\}$ is a $\Bbbk$-basis of
$R_1l_1$ and $\{q_1, q_2, q_3\}$ is a $\Bbbk$-basis of $R_1V$.
{Let $s$ be
any generator of the socle of $R$ (which is a one-dimensional
$\Bbbk$-vector-space).}
Then $\Bbbk
l_1 \subsetneq \Bbbk \{l_1, l_2\} \subsetneq \Bbbk \{l_1, l_2, l_3\}
\subsetneq \Bbbk \{l_1, \ldots, l_4\}$, $\Bbbk q_1 \subsetneq \Bbbk \{q_1,
q_2\} \subsetneq \Bbbk \{q_1, q_2, q_3\} \subsetneq \Bbbk \{q_1, \ldots,
q_4\}$, $\Bbbk s$ is an embedding filtration of $R$.
\end{examplebox}

\begin{examplebox}[Strongly stable ideals]
If $\mathfrak a$ is a strongly stable $A$-ideal (also called
$0$-Borel) and $R = A/\mathfrak a$, then $\hilbertPoset_R$ need not admit
an embedding;  contrast this with \lex-segment ideals, in
Remark~\ref{remark:embInherits}. Take $A = \Bbbk[x_1, x_2, x_3]$, $\mathfrak a = (x_1,
x_2)^2(x_1, x_2, x_3)^2$. Consider $I = (x_1^2, x_1x_2, x_2^2)R$ and $I' =
(x_1^2, x_1x_2, x_1x_3)R$.  
Then $H_{I} = 3\hssymb^2 + 7 \hssymb^3$ and 
$H_{I'} = 3\hssymb^2 + 6\hssymb^3 + \sum_{t\geq 4} \hssymb^t$.
Hence, if $\hilbertPoset_R$ admitted an embedding, then there would be an
$R$-ideal $J$ with 
$H_{J} = 3\hssymb^2 + 6 \hssymb^3$;
however, such an ideal does not exist.
{(We see this as follows. Let $J$ be any homogeneous
$R$-ideal such that $|J_2| = 3$ and $|J_4| =0$. In particular,
$|J_2\mathfrak m^2| = 0$ so $J_2 \subseteq (0\!:_R\!\mathfrak m^2)_2 = I_2$.
Hence $J_2 = I_2$. Since $I$ is generated by $I_2$, $I_3 = I_2\mathfrak m$. 
Therefore $|J_3| \geq |I_3| = 7$.)}
\end{examplebox}

\begin{examplebox}[Gr\"obner flags]
Let $A = \Bbbk[x_1, \ldots, x_6]$ and $\mathfrak a = x_1(x_1, \dots, x_4) +
(x_2^2, x_2x_3) + (x_3^2) + x_4(x_4,x_5) + x_5(x_5,x_6)$. Let $R =
A/\mathfrak a$. 
Then $0 \subseteq \Bbbk\{x_1\} \subseteq \Bbbk\{x_1, x_2\} \subseteq
\cdots \subseteq R_1$ is a Gr\"obner flag for $R$; see~\cite{CRVgflags01} 
for the definition and properties. In particular $R$ is Koszul. Note that
$H_{(x_1)} = \hssymb^1 + 2 \hssymb^2 + \sum_{t=3}^\infty \hssymb^t$ and that 
$H_{(x_5)} = \hssymb^1 + 3\hssymb^2$.
There is, however, no $R$-ideal $I$ with
$H_{I} = \hssymb^1 + 2\hssymb^2$. 
(For, if $I$ is such that $|I_1| = 1$ and
$|I_3| = 0$, then $I \subseteq (0\!:_R\!\mathfrak m^2) = (x_5)$. Hence $I_1
= \Bbbk x_5$, and, therefore, $|I_2| \geq 3$.) 
\end{examplebox}

\begin{examplebox}[Tensor products] 
Let $A = \Bbbk[x,y,z]$ and $\mathfrak a = (x,y)^3 + (z^2)$. Then 
$H_{(x)} = \hssymb^1 + 3 \hssymb^2 + 2\hssymb^3$ and 
$H_{(z)} = \hssymb^1 + 2 \hssymb^2 + 3\hssymb^3$.
However, there is  no monomial $R$-ideal $I$ with
$H_{R/I} = \hssymb^1 + 2 \hssymb^2 + 2\hssymb^3$, for, if $I$ is any
monomial ideal with $I_1\not = 0$, then $H_I \succcurlyeq H_{(x)} =
H_{(y)}$ or 
$H_I \succcurlyeq H_{(z)}$.
Notice that $R \simeq \Bbbk[x,y]/(x,y)^3
\otimes_\Bbbk \Bbbk[z]/(z^2)$; both
$\hilbertPoset_{\frac{\Bbbk[x,y]}{(x,y)^3}}$ and 
$\hilbertPoset_{\frac{\Bbbk[z]}{(z^2)}}$ admit embedding. (It suffices to
consider monomial ideals; see Remark~\ref{remark:embImMonom}.)
\end{examplebox}

\begin{remarkbox}[Veronese subrings, due to A.~Conca]
Embeddings of Hilbert functions restrict to Veronese subrings. Let $R$ be a
standard graded algebra such that $\hilbertPoset_R$ admits an embedding
$\epsilon$.  Let $m \in \naturals$.
Let $S$ be the $m$th Veronese subring of $R$, i.e., $S = \bigoplus_{i
\in \naturals} S_i$ with $S_i = R_{im}$ for all $i \in \naturals$.
We define $\bar \epsilon : \hilbertPoset_S \longrightarrow \idealPoset_S$
as follows. Let $H \in \hilbertPoset_S$. Let
$\{0 = V_{d,0} \subsetneq V_{d,1} \subsetneq \cdots \subsetneq V_{d,|R_d|}
= R_d : d \in \naturals\}$ be the embedding filtration of $R$ given by
$\epsilon$. In degree $d \in \naturals$, we define $\bar\epsilon(H)$ by
setting $(\bar\epsilon(H))_d = V_{md, H^d}$. Note that 
$\bigoplus_{d \in \naturals} V_{md, H^d}$ 
is an ideal of $S$, since $R_m V_{md, H^d}
\subseteq V_{m(d+1), H^{d+1}}$. Hence 
$\{0 = V_{md,0} \subsetneq V_{md,1} \subsetneq \cdots 
\subsetneq V_{md,|R_{md}|} = S_d : d \in \naturals\}$ 
is an embedding filtration of $S$.
\end{remarkbox}

\begin{remarkbox}[Gotzmann property] 
\label{remarkbox:gotzmann}
Suppose that $R = A$, a polynomial ring. Then the Gotzmann Persistence
Theorem~\cite {BrHe:CM}*{Theorem~4.3.3} asserts that for all homogeneous
$R$-ideals $I$, if $I$  is generated minimally in degrees less than or
equal to $d$ and $|\mathfrak m I_d| \leq |\mathfrak mV|$ for all
$\Bbbk$-vector-spaces $V \subseteq R_d$ satisfying $|V| = |I_d|$, then for
all $t \geq d$, $|\mathfrak m I_t| \leq |\mathfrak mW|$ for all
$\Bbbk$-vector-spaces $W \subseteq R_t$ satisfying $|W| = |I_t|$. Now
suppose that $R$ is any standard graded algebra, such that
$\hilbertPoset_R$ admits an embedding $\epsilon$. Reinterpreting the
conclusion of the Gotzmann Persistence Theorem, we say that $R$ has the
\define{Gotzmann property} (with respect to $\epsilon$) if for all
homogeneous $R$-ideals $I$, if $I$ is minimally generated in degrees than
or equal to $d$ and $\epsilon(H_I)$  has no minimal generators in degree
$d+1$, then $\epsilon(H_I)$ is generated minimally in degrees less than or
equal to $d$ as well. 
Note that having the Gotzmann property is independent of the chosen
embedding: as we observed in Remark~\ref{remark:zerothBetti}, the degrees
and number of minimal generators of $\epsilon(I)$ depends only on $H_I$.

There are standard graded algebras $R$ such that $R$ does not have the
Gotzmann property while $\hilbertPoset_R$ admits an embedding. 
As an example, consider
$A=\Bbbk[x]$ and $\mathfrak a =(x^3)$. It is immediate that the 
quotient $R=A/\mathfrak a$ has the Gotzmann property and that
$\hilbertPoset_R$ admits an embedding.  Let 
$S=R[y]$ and notice that it has an embedding induced by lexicographic
order on $A[y]$. On the other hand $S$ fails to have the Gotzmann property 
since both $I=(y)S$ and  $\epsilon(I)=(x,y^3)S$ have no generators in
degree $2$. 
\end{remarkbox}

\begin{discussionbox}
\label{discussion:embedOrders}
Suppose that $R$ is defined by a monomial ideal or that it is
an affine semigroup algebra all of
whose generators are of the same degree.  Suppose that $R$ has an embedding
filtration.  Then, as in Remark~\ref{remark:embImMonom}, we can take
initial ideals, and obtain a graded total order $(\mathcal B, \tau)$ that
satisfies, for all $d \in \naturals$ and for all $\tau$-segments $V$ of
$R_d$,
\begin{inparaenum}
\item $R_1V$ is a $\tau$-segment of $R_{d+1}$, and,
\item $|R_1V| \leq |R_1W|$ for all $\Bbbk$-subspaces $W \subseteq R_d$ with
$|W| = |V|$.
\end{inparaenum}
We call such a graded total order an \define{embedding order} on $R$.
Conversely, any embedding order gives rise to an embedding filtration. We
thus conclude that $\hilbertPoset_R$ admits an embedding if and only if
there exists an embedding order on $R$.
\end{discussionbox}

When $R = A$ or $R = A/(x_1^2, \ldots, x_n^2)$ the embedding orders on $R$
are enumerated in~\cite {MermLexlike06}.

\begin{propn}
\label{thm:multOrderIsLex}
Let $(\mathcal B, \tau)$ be an embedding order of $R$. Suppose that it
is a monomial order.  
Then there exists a graded lexicographic order \lex on $A$ such that 
for all $f, f' \in \mathcal B$, if
$f \prec_\tau f'$, then $f \prec_\lex f'$.
\end{propn}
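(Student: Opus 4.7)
The plan is to extract a variable ordering from $\tau|_{\mathcal B_1}$, form the corresponding graded lex order $\lex$ on $A$, and prove by induction on degree that $\tau$ agrees with $\lex$ on $\mathcal B$. Fix a labeling so that $x_1 \succ_\tau x_2 \succ_\tau \cdots \succ_\tau x_n$, and let $\lex$ be the graded lex order on $A$ with $x_1 >_\lex \cdots >_\lex x_n$. I claim $\tau|_{\mathcal B_d} = \lex|_{\mathcal B_d}$ for every $d$, which yields the proposition (and more). The cases $d \leq 1$ are immediate.

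For the inductive step I would work at the level of segments, showing that the $\tau$-segments of $\mathcal B_d$ coincide with the lex-segments of $\mathcal B_d$. Given $f, f' \in \mathcal B_d$, the monomial-order property (dividing by $g = \gcd(f, f')$) reduces a comparison either to a coprime pair or to a strictly lower degree, where the induction applies. For any $\tau$-segment $V \subseteq \mathcal B_d$, two key properties hold. First, $V$ is \emph{Borel closed}: whenever $x_j h \in V$ and $x_i h \in \mathcal B_d$ with $i < j$, the monomial-order property (after dividing out $h$) yields $x_i h \succ_\tau x_j h$, so $x_i h \in V$. Second, $V$ has \emph{minimum $R_1$-growth} among subspaces of $R_d$ of dimension $|V|$, by the embedding-order condition. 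The remaining step is the classical minimality fact: a Borel-closed subset of $\mathcal B_d$ that attains the minimum $R_1$-growth is necessarily the lex-segment. This is essentially Macaulay's theorem (and its Clements-Lindstr\"om extension in the monomial-quotient case), uniqueness following from a compression/shifting argument---any Borel-closed $V$ that is not already a lex-segment can be modified by swapping an element with a lex-larger monomial outside $V$, preserving Borel closure without increasing $R_1$-growth, and iterating terminates at the lex-segment.

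The main obstacle is the uniqueness claim: among Borel-closed subsets of fixed size, the lex-segment is the unique minimizer of $R_1$-growth. Borel closure alone is too weak (for instance $\{x_1^2, x_1 x_2, x_2^2\}$ in $\Bbbk[x_1, x_2, x_3]_2$ is Borel closed but not lex), and it is precisely the minimum-growth requirement that singles out lex. Carrying this out in the generality of Discussion~\ref{discussion:embedOrders}, where $\mathcal B_d$ may be a proper subset of $\monomials(A)_d$ when $R$ is a monomial quotient, or where $R$ is an affine semigroup algebra, requires tracking that each compression step remains inside $\mathcal B_d$ and does not increase the $R_1$-shadow computed in $R$; this adapts the classical polynomial-ring argument without essential change.
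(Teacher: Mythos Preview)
Your proposal has a genuine gap at precisely the point you flag as the ``main obstacle.'' The assertion that a Borel-closed subset of $\mathcal B_d$ realizing the minimum $R_1$-growth must be the lex-segment is \emph{not} Macaulay's theorem (nor Clements--Lindstr\"om): those results say the lex-segment \emph{achieves} the minimum, not that it is the \emph{unique} Borel set doing so. Your compression sketch likewise only proves attainment --- each swap ``does not increase'' growth, so iterating lands you at lex with growth $\leq$ that of $V$, but this says nothing about whether $V$ itself already had strictly larger growth. Turning this into a uniqueness statement requires a genuinely new argument, and extending it to arbitrary monomial quotients or semigroup rings (where the shadow is computed in $R$, not $A$) is not the routine adaptation you suggest. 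You also never use the structural fact that $R_1 V$ is again a $\tau$-segment (not merely of minimal size), and it is exactly this closure property --- rather than bare minimality --- that does the work.

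The paper's proof bypasses the uniqueness question entirely. After the same $\gcd$ reduction you mention (so $f, f'$ are coprime), it passes to the quotient $R/(x_1,\dots,x_{i-1})$, which inherits the embedding (Remark~\ref{remark:embInherits}), so one may assume the smallest variable $x_1$ divides one of $f,f'$. The single observation needed is that the degree-$t$ piece of the principal ideal $(x_1)$ is a $\tau$-segment: indeed $V_{1,1}=\Bbbk x_1$, and the embedding-filtration property $R_1 V_{d,r}=V_{d+1,s}$ propagates this through all degrees. Now $f'\prec_\lex f$ with $\gcd(f,f')=1$ forces $x_1\mid f'$ and $x_1\nmid f$; hence $f'$ lies in the $\tau$-segment $(x_1)_t$ while $f$ does not, giving $f'\prec_\tau f$ --- the desired contradiction. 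This argument uses only the one specific $\tau$-segment $(x_1)_t$, not a characterization of all of them, which is why it avoids the uniqueness issue altogether.
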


\begin{proof}
Without loss of generality, we may assume (since $\mathfrak{a}_1 = 0$) that 
$x_1 \prec_\tau \cdots \prec_\tau x_n$. Let $\lex$ be the graded
lexicographic order on $A$ with $x_1 \prec_\lex \cdots \prec_\lex x_n$.
Assume the contrary, and
let $t$ be the smallest degree such that there exist monomials 
$f$ and $f'$ in $A$ with $t = \deg f = \deg f'$ such that
$f \prec_\tau f'$ and $f' \prec_\lex f$. By the choice of $t$,
we see that $\gcd(f, f') = 1$.

Let $i$ be the smallest index such that $x_i$ divides at least one of $f$
and $f'$. By going modulo $(x_1,\dots,x_{i-1})$ and using
Remark~\ref{remark:embInherits}, we can assume, without loss of generality,
that $i=1$. Since $f' \prec_\lex f$, $x_1 \mid f'$.

Let $\{0 = V_{d,0} \subsetneq V_{d,1} \subsetneq \cdots \subsetneq
V_{d,|R_d|} = R_d : d \in \naturals\}$ be the embedding filtration of $R$
induced by $\tau$. 
Since $V_{1,1}$ is spanned by $\phi(x_1)$, there exists $s$
such that $V_{t,s}$ is the span of the images of all the
degree-$t$ monomials in $\mathcal B$ divisible by $x_1$. Therefore
$\phi(f') \in V_{t,s}$, which implies that 
$f' \prec_\tau f$, a contradiction.
\end{proof}

\begin{remarkbox}
If $R$ has an embedding order $\tau$, then for all $\tau$-segment
$R$-ideals $I$, $R/I$  inherits the embedding order $\tau$ (see
Remark~\ref{remark:embInherits}). 
Furthermore, $(0 \!:_R\!\mathfrak m)$ is a $\tau$-segment ideal. 
For $d \in \naturals$, let $s_d = \dim_\Bbbk (0 \!:_R\!\mathfrak m)_d$.
Let $\{0 = V_{d,0} \subsetneq V_{d,1} \subsetneq \cdots \subsetneq
V_{d,|R_d|} = R_d : d \in \naturals\}$ be the embedding filtration of $R$
induced by $\tau$. Then $|R_1 V_{d,s_d}| \leq |R_1(0 \!:_R\!\mathfrak m)_d| 
= 0$, so $V_{d,s_d} \subseteq (0 \!:_R\!\mathfrak m)_d$; since 
$|V_{d,s_d}| = |(0 \!:_R\!\mathfrak m)_d|$, we see further that
$V_{d,s_d} = (0 \!:_R\!\mathfrak m)_d$. Hence 
$(0 \!:_R\!\mathfrak m) = \oplus_d (0 \!:_R\!\mathfrak m)_d$ 
is a $\tau$-segment ideal.
\end{remarkbox}

\begin{examplebox}[Embedding with no monomial orders]
Let $A = \Bbbk[w, x, y, z]$, with homogeneous
maximal ideal $\mathfrak n$. Let
$\mathfrak a = (wxy, wxz, wyz, xyz) +
\mathfrak n^4$ and $R = A/\mathfrak a$. Without loss of generality, we
order $R_1$ by $w \prec_\tau x \prec_\tau y \prec_\tau z$. Note that
$|(w^2R)_3| = 4 > 2 = |(wxR)_3|$, so
$wx \prec_\tau w^2$. 
In fact, there is no embedding order in which $w^2$
precedes all the other monomials in $R_2$. On the other hand, we have an
embedding order $wx \prec_\tau wy \prec_\tau w^2 \prec_\tau wz \prec_\tau
xy \prec_\tau x^2 \prec_\tau xz \prec_\tau y^2 \prec_\tau yz \prec_\tau
z^2$.  Since $\mathfrak n^4 \subseteq \mathfrak a$ we can give any order on
$R_3$, provided $R_1V$ is a $\tau$-segment of $R_3$ for every
$\tau$-segment $V$ of $R_2$. For example, we may take $w^2x \prec_\tau wx^2
\prec_\tau w^2y \prec_\tau wy^2 \prec_\tau w^3 \prec_\tau w^2z \prec_\tau
wz^2 \prec_\tau x^2y \prec_\tau xy^2 \prec_\tau x^3 \prec_\tau x^2z
\prec_\tau xz^2 \prec_\tau y^3 \prec_\tau y^2z \prec_\tau yz^2 \prec_\tau
z^3$.\end{examplebox}

\section{Extension rings}
\label{sec:extnRings}
For certain $R$-free quotient rings $S$ of $R[z]$, 
we determine a sufficient condition (which we call \define{$z$-stability},
see Definition~\ref{defn:coeffSeq}) for extending an embedding filtration
of $R$ to $S$.  We use it to extend embeddings under polarization and
distraction, and prove the following analogue 
of a theorem of Clements and Lindstr\"om.

\begin{notation}
\label{notation:extnRings}
In this section $z$ is an indeterminate over $A$ and $R$.
Let $t \in \naturals \cup \{\infty\}$ and $S = R[z]/(z^t)$.
If $t = \infty$, the $R[z]$-ideal $(z^t)$ denotes the zero ideal.
As usual $R = A/\mathfrak a$, where $A$ is a polynomial ring. 
We will denote by $\mathfrak{I}$ the set $\{i\in \mathbb{Z} \vert 0\leq i <t\}.$
Let $B = A[z]$; treat $S$ as a quotient ring of $B$. Treat $B$ and $S$ as
\define{multigraded} using the natural decomposition $B = \oplus_{i \in
\naturals} \oplus_{j \in \naturals} A_iz^j$ as $\Bbbk$-vector-spaces. By 
$ \mathcal F := 
\{0 = V_{d,0} \subsetneq V_{d,1} \subsetneq \cdots 
\subsetneq V_{d,|R_d|} = R_d : d \in \naturals\}$, we mean the 
embedding filtration of $R$ 
that corresponds to the embedding of $\hilbertPoset_R$
(by Proposition~\ref{propn:embIFFfiltr}).
\end{notation}

\begin{defn}
\label{defn:coeffSeq}
Let $W \subseteq S_d$ be a multigraded $\Bbbk$-vector-space. The
\define{$R$-coefficient sequence} of $W$ is the sequence 
$(W_{d-i})_{i\in \mathfrak I}$ of $\Bbbk$-subspaces $W_{d-i} \subseteq R_{d-i}$
defined by the $\Bbbk$-vector-space decomposition $W =
\bigoplus_{i\in \mathfrak I}W_{d-i}z^i$.  We say that $W$ is
\define{$z$-stable} if $R_1 W_{d-i} \subseteq W_{d-i+1}$ for all positive  
$i\in \mathfrak I$. Let $I \subseteq S$ be an ideal; we say that $I$ is
\define{$z$-stable} if $I$ is multigraded and $I_d$ is $z$-stable for all
$d \geq 0$.
\end{defn}

\begin{thm}
\label{thm:extnRings}
Let $t \in \naturals \cup \{\infty\}$ and $S = R[z]/(z^t)$. Suppose that
$\hilbertPoset_R$ admits an embedding and that for all $H \in
\hilbertPoset_S$, there exists a $z$-stable $S$-ideal $I$ such that
$H_{I} = H$. Then $\hilbertPoset_S$ admits an embedding.
\end{thm}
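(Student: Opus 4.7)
By Proposition~\ref{propn:embIFFfiltr}, it suffices to build an embedding filtration of $S$. Let $\{V_{d,r}\}$ be the embedding filtration of $R$. My plan is to construct a filtration of $S$ out of ``$R$-lexified $z$-stable'' multigraded subspaces, where the axioms of Definition~\ref{defn:embeddingFlag} reduce, via the hypothesis, to corresponding statements about $z$-stable objects; these are then settled by applying the $R$-filtration coefficient-by-coefficient.

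For a $z$-stable multigraded $\Bbbk$-subspace $W = \bigoplus_{i\in\mathfrak I} W_{d-i}\, z^i \subseteq S_d$, set
\[
\mathrm{Lex}(W) := \bigoplus_{i\in\mathfrak I} V_{d-i,\, |W_{d-i}|}\, z^i.
\]
The key technical claim is that $\mathrm{Lex}(W)$ is again $z$-stable, $|\mathrm{Lex}(W)|=|W|$, and $|S_1\,\mathrm{Lex}(W)| \le |S_1 W|$. The embedding-filtration axiom for $R$ gives $R_1 V_{d-i,|W_{d-i}|} = V_{d-i+1,s_i}$ with $s_i \le |R_1 W_{d-i}|$; since $W$ is $z$-stable, $|R_1 W_{d-i}| \le |W_{d-i+1}|$, whence $R_1 V_{d-i,|W_{d-i}|} \subseteq V_{d-i+1,|W_{d-i+1}|}$, giving $z$-stability of $\mathrm{Lex}(W)$. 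For the $|S_1\cdot|$-bound, the $z^i$-coefficient of $S_1\,\mathrm{Lex}(W)$ in $S_{d+1}$ equals $V_{d-i+1,\max(s_i,|W_{d-i+1}|)}$, whose dimension is at most $\max(|R_1 W_{d-i}|,|W_{d-i+1}|) \le |R_1 W_{d-i} + W_{d-i+1}|$, which is the dimension of the corresponding coefficient of $S_1 W$; summing over $i$ gives the claim. By the same coefficient-by-coefficient reasoning, if $I$ is a $z$-stable $S$-ideal then $\mathrm{Lex}(I) := \bigoplus_d \mathrm{Lex}(I_d)$ is again a $z$-stable $S$-ideal with $H_{\mathrm{Lex}(I)} = H_I$.

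I now reduce arbitrary subspaces of $S_d$ to $R$-lexified $z$-stable ones. For any $W \subseteq S_d$, the ideal $(W)S$ has Hilbert series $H$ with $H^d = |W|$ and $H^{d+1} = |S_1 W|$. By hypothesis there is a $z$-stable $S$-ideal $I$ with $H_I = H$; then $\mathrm{Lex}(I)_d$ is an $R$-lexified $z$-stable multigraded subspace of dimension $|W|$ with $|S_1\,\mathrm{Lex}(I)_d| \le |\mathrm{Lex}(I)_{d+1}| = |S_1 W|$.

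For each $d$ and $r \in \{0,\dots,|S_d|\}$, take $U_{d,r}$ to be an $R$-lexified $z$-stable multigraded subspace of $S_d$ of dimension $r$ with $|S_1 U_{d,r}|$ as small as possible, breaking ties by choosing the coefficient sequence $(|(U_{d,r})_{d-i}|)_{i\in\mathfrak I}$ lexicographically largest (with the $z^0$-coefficient most significant, then the $z^1$-coefficient, and so on). Axiom~(ii) of Definition~\ref{defn:embeddingFlag} for $\{U_{d,r}\}$ is then immediate from the preceding two paragraphs. The main obstacle is verifying the flag inclusions $U_{d,r} \subsetneq U_{d,r+1}$ together with Axiom~(i), that $S_1 U_{d,r}$ coincides with some $U_{d+1,s}$; I expect to handle these by induction on $r$, since the coefficient-by-coefficient computation for $\mathrm{Lex}$ shows that $S_1$ applied to an $R$-lexified $z$-stable subspace is of the same form and is $|S_1\cdot|$-minimal at its own dimension, and the lexicographic tie-breaking rule is then arranged so that incrementing $r$ by one enlarges $U_{d,r}$ in exactly one $R$-coefficient, yielding a flag that is $S_1$-stable across degrees.
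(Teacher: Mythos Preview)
Your reduction to $z$-stable ideals and the $\mathrm{Lex}$ operation are both correct and match what the paper does.  However, the step you flag as ``the main obstacle'' is not merely unfinished: with your chosen tie-breaking rule the flag inclusions actually fail.

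Take $R=\Bbbk[x,y]$ with its lex embedding filtration and $t=\infty$, so $S=R[z]$.  In degree $d=4$ the $R$-lexified $z$-stable subspaces are determined by coefficient-dimension sequences $(a_0,a_1,a_2,a_3,a_4)$ with $a_i\le 5-i$ and $a_{i-1}\ge a_i+1$ whenever $a_i>0$ (because $|R_1V_{j,k}|=k+1$).  For such a subspace $W$ one has $|S_1W|=|R_1V_{4,a_0}|+|W|$, so your primary criterion is exactly ``minimize $a_0$''.  At $r=6$ the unique possibility with minimal $a_0$ is $(3,2,1,0,0)$, so $U_{4,6}=(3,2,1,0,0)$.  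At $r=7$ there are two $|S_1|$-minimizers, both with $a_0=4$: namely $(4,3,0,0,0)$ and $(4,2,1,0,0)$.  Your lexicographic tie-break (largest $z^0$-coefficient first, then $z^1$, etc.) selects $(4,3,0,0,0)$.  But $(3,2,1,0,0)\not\le(4,3,0,0,0)$ in the third coordinate, so $U_{4,6}\not\subseteq U_{4,7}$.  The same example shows that the hoped-for statement ``incrementing $r$ by one enlarges $U_{d,r}$ in exactly one $R$-coefficient'' is false.

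The paper avoids this by optimizing a different quantity.  Rather than minimizing the scalar $|S_1W|$ and then breaking ties, it minimizes the entire vector of partial sums $d_R(W)=(\sum_{j\le i}|W_{d-j}|)_i$ in the componentwise order; the resulting subspaces are given the name \emph{segments} and are characterized structurally by $V_{d-i,r_{d-i}}\subseteq R_{j-i}V_{d-j,\min\{1+r_{d-j},|R_{d-j}|\}}$ for $i<j$.  Two short lemmas then show that $d_R$-minimality forces this structural condition, and that any two segments are comparable by inclusion.  In the example above the segment of length $7$ is $(4,2,1,0,0)$, which does contain $(3,2,1,0,0)$.  If you reverse your tie-break to \emph{smallest} $a_1$, then smallest $a_2$, etc., you recover exactly the $d_R$-minimization, but you would still need to prove the comparability lemma; the structural description of segments is what makes that argument go through.
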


\begin{defn}
Let $d \in \naturals$. A \define{segment} of $S_d$ is a $z$-stable  $\Bbbk$-vector-space
$\bigoplus_{i\in \mathfrak I}V_{d-i,r_{d-i}}z^i$ such that
$V_{d-i, r_{d-i}} \subseteq R_{j-i}V_{d-j,\min\{1+r_{d-j}, |R_{d-j}|\}}$, 
for all $i < j \in \mathfrak I$;
its \define{length} is $\sum_{i\in \mathfrak I} r_{d-i}$.
\end{defn}

\begin{obs}
\label{obs:zStableGrowth} 
Let $W \subseteq S_d$ be a $z$-stable $\Bbbk$-vector-space, with
$R$-coefficient sequence $(W_{d-i})_{i\in \mathfrak I}$.
Then 
\[
S_1W = 
R_1W_d \bigoplus \bigoplus_{\substack{i \in \mathfrak I \\ i > 0}}
W_{d-i+1}z^i
\]
If, further, $W$ is a segment of $S_d$, $S_1W$ is a segment of $S_{d+1}$.
\end{obs}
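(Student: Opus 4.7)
The plan is to establish the displayed decomposition of $S_1 W$ by direct computation, and then to use that formula to verify the segment property in the second assertion.

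For the formula, since $\mathfrak a_1 = 0$, the degree-one part splits as $S_1 = R_1 \oplus \Bbbk z$, so $S_1 W = R_1 W + zW$. Breaking each summand along $z$-degrees, $R_1 W = \bigoplus_{i \in \mathfrak I}(R_1 W_{d-i})z^i$, while $zW$ has $z^i$-component equal to $W_{d+1-i}$ for $i \geq 1$ and vanishing for $i = 0$ (the $z^t$ term, if $t < \infty$, dies in $S$). Adding the two and invoking the $z$-stability of $W$, which asserts $R_1 W_{d-i} \subseteq W_{d+1-i}$ for $i > 0$, each slot with $i > 0$ collapses to $W_{d+1-i}$, while the $z^0$-slot contributes $R_1 W_d$. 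This yields the asserted direct sum decomposition.

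For the second assertion, write $W = \bigoplus_{i \in \mathfrak I} V_{d-i, r_{d-i}}z^i$. Axiom (i) of the embedding filtration gives $R_1 V_{d, r_d} = V_{d+1, s}$ for some $s$, so the formula for $S_1W$ rewrites as $\bigoplus_{i \in \mathfrak I} V_{d+1-i, r'_{d+1-i}}z^i$ with $r'_{d+1} := s$ and $r'_{d+1-i} := r_{d+1-i}$ for $i \geq 1$. The $z$-stability of $S_1W$, i.e.\ $R_1 V_{d+1-i, r'_{d+1-i}} \subseteq V_{d+2-i, r'_{d+2-i}}$ for $i > 0$, is then immediate: at $i = 1$ it is the defining equality for $s$, and for $i \geq 2$ it is the $z$-stability of $W$ applied at index $i - 1$.

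The main obstacle is verifying the nested containment $V_{d+1-i, r'_{d+1-i}} \subseteq R_{j-i}V_{d+1-j, \min\{1 + r'_{d+1-j}, |R_{d+1-j}|\}}$ for $0 \leq i < j$ in $\mathfrak I$. For $i \geq 1$ this is a reindexed instance of the segment condition for $W$ at indices $(i-1, j-1)$. The only new case is $i = 0$: for $j \geq 2$, I would apply the segment condition for $W$ at $(0, j-1)$ to obtain $V_{d, r_d} \subseteq R_{j-1} V_{d+1-j, \min\{1 + r_{d+1-j}, |R_{d+1-j}|\}}$, then multiply on the left by $R_1$ and use $R_1 V_{d, r_d} = V_{d+1, s}$ to conclude. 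For $j = 1$ the required inclusion reduces to the trivial $R_1 V_{d, r_d} \subseteq R_1 V_{d, \min\{1+r_d, |R_d|\}}$.
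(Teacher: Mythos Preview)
Your proof is correct. The paper states this result as an Observation without proof, so there is no argument to compare against; you have supplied the details the authors left to the reader. The decomposition of $S_1W$ follows exactly as you say from $S_1 = R_1 \oplus \Bbbk z$ together with $z$-stability, and your reindexing verification of the segment condition for $S_1W$ is complete, including the only nontrivial case $i=0$, $j\geq 2$, where you multiply the segment inequality for $W$ at $(0,j-1)$ by $R_1$ and use $R_1V_{d,r_d}=V_{d+1,s}$. (The appeal to $\mathfrak a_1=0$ is unnecessary---$S_1=R_1\oplus\Bbbk z$ holds directly from $S=R[z]/(z^t)$---but it does no harm.)
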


\begin{defn}
For a multigraded $\Bbbk$-vector-space $W \subseteq S_d$ with
$R$-coefficient sequence $(W_{d-i})_{i\in \mathfrak I}$,
let 
\[
d_R(W) = \left(\sum_{j=0}^{i}|W_{d-j}|\right)_{i\in \mathfrak I}.
\]
Let $\Lambda_d = \{d_R(W) : W \;\text{is a 
multigraded $\Bbbk$-vector-space of $S_d$}\}$.
Give a partial order $\lessdot$ on $\Lambda_d$ by setting 
$(a_i)_{i\in \mathfrak I} \lessdot (b_i)_{i\in \mathfrak I}$ 
if $a_i \leq b_i$ for all $i$. For all 
$(a_i)_{i\in \mathfrak I} \in \Lambda_d$, $a_i = a_d$
for all $i \geq d$ and $a_d \leq |S_d|$; hence $\Lambda_d$ is a finite set.
\end{defn}

\begin{lemma}
\label{lemma:zStableMinSegment}
Let $W = \bigoplus_{i\in \mathfrak I}V_{d-i,r_{d-i}}z^i
\subseteq S_d$ be a $z$-stable $\Bbbk$-vector-space such that
$d_R(W)$ is minimal in $(\Lambda_d, \lessdot)$. Then $W$ is a segment.
\end{lemma}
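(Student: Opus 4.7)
The plan is to argue by contradiction. Suppose $W$ is $z$-stable but not a segment, and pick a violating pair $(i, j)$ with $i < j$ in $\mathfrak{I}$ and $j - i$ minimal, where ``violating'' means $V_{d-i, r_{d-i}} \not\subseteq R_{j-i}V_{d-j, \min\{1 + r_{d-j}, |R_{d-j}|\}}$. The goal is to construct a $z$-stable multigraded $W' \subseteq S_d$ with $|W'| = |W|$ and $d_R(W') \lessdot d_R(W)$ strict in at least one coordinate, contradicting the minimality of $d_R(W)$. It is convenient to encode the embedding filtration of $R$ by functions $\phi_e$ with $R_1 V_{e, r} = V_{e+1, \phi_e(r)}$ (well-defined by property~(i) of an embedding filtration), so that $z$-stability of $W$ reads $\phi_{d-m}(r_{d-m}) \leq r_{d-m+1}$ for each positive $m$, and the segment condition becomes an upper bound on $r_{d-m}$ in terms of iterates of $\phi$. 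Observe that $r_{d-j} < |R_{d-j}|$, since otherwise the right-hand side of the segment inclusion equals $R_{d-i}$ and cannot fail.

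The natural candidate is $W'$ obtained by setting $r_{d-i}' = r_{d-i} - 1$, $r_{d-j}' = r_{d-j} + 1$, and $r_{d-k}' = r_{d-k}$ for all other $k$. Then $|W'| = |W|$, and only the partial sums of $d_R$ at coordinates $i, i+1, \ldots, j-1$ change, each strictly decreasing by $1$; so $d_R(W') \lessdot d_R(W)$ with strict inequality at coordinate $i$. Checking that $W'$ is $z$-stable reduces to four inequalities at the indices $m \in \{i, i+1, j, j+1\}$ where the modifications interact; those at $m = i$ and $m = j+1$ follow immediately from $z$-stability of $W$ and monotonicity of $\phi$.

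The two delicate checks exploit the minimality of the gap. For $m = i+1$ (when $j > i+1$) we need the strict slack $\phi_{d-i-1}(r_{d-i-1}) < r_{d-i}$; this follows by applying $R_1$ to the non-violation of the smaller-gap pair $(i+1, j)$ and comparing filtration indices in degree $d - i$ with the violation of $(i, j)$. For $m = j$ (when $j > i+1$) we need $\phi_{d-j}(r_{d-j}+1) \leq r_{d-j+1}$, which is tightness of the non-violation of $(j-1, j)$; we establish this by assuming strict non-violation, inferring $V_{d-j+1, r_{d-j+1}+1} \subseteq R_1 V_{d-j, r_{d-j}+1}$, and combining with the non-violation of $(i, j-1)$ to contradict the violation of $(i, j)$. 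When $j = i+1$, both delicate checks collapse into the single inequality $\phi_{d-i-1}(r_{d-i-1}+1) \leq r_{d-i} - 1$, which is exactly the violation itself. The main point of care I anticipate is the bookkeeping of filtration indices and boundary cases such as $r_{d-j+1} = |R_{d-j+1}|$ (where the check at $m = j$ is trivial) or $r_{d-k-1} = |R_{d-k-1}|$ at the increment side; these require attentive case analysis but no essential new ideas.
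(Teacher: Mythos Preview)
Your proposal is correct and follows essentially the same approach as the paper: argue by contradiction, choose a violating pair $(i,j)$ with $j-i$ minimal, and swap one unit from index $i$ to index $j$ to produce a $z$-stable $\tilde W$ with strictly smaller $d_R$. The only differences are cosmetic—your use of the functions $\phi_e$ and your direct verification of the four boundary $z$-stability constraints, where the paper instead first establishes the equality $V_{d-i', r_{d-i'}} = R_{j-i'}V_{d-j, 1+r_{d-j}}$ for all $i < i' < j$ and then specializes to $i' = i+1$ and $i' = j-1$.
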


\begin{proof}
By way of contradiction assume that $W$ is not a segment. 
Pick $i < j \in \mathfrak I$ such that 
$V_{d-i, r_{d-i}} \not \subseteq 
R_{j-i}V_{d-j,\min\{1+r_{d-j}, |R_{d-j}|\}}$. 
We may assume that $j-i$ is minimal with this property.
As $V_{d-j, |R_{d-j}|} = R_{d-j}$, we see that $r_{d-j} < |R_{d-j}|-1$. 
Hence $V_{d-i, r_{d-i}} \not \subseteq R_{j-i}V_{d-j,1+r_{d-j}}$; since
these vector-spaces belong to $\mathcal F$, we observe that
\begin{equation}
\label{equation:Wzstablecondition}
R_{j-i}V_{d-j,1+r_{d-j}} \subsetneq V_{d-i, r_{d-i}}.
\end{equation}
Define
\[
\tilde W = \bigoplus_{h\in \mathfrak I \minus \{i,j\}}V_{d-h,r_{d-h}}z^h 
\bigoplus V_{d-i, r_{d-i}-1}z^i
\bigoplus V_{d-j, r_{d-j}+1}z^j.
\] 
It is immediate that $d_R(\tilde W) \lessdot d_R(W)$ and that they are not
equal to each other. Hence it suffices to show that $\tilde W$ is 
$z$-stable. We consider two cases.

If $j=i+1$, then we need to show that
$R_1V_{d-i-1, 1+ r_{d-i-1}} \subseteq V_{d-i, r_{d-i}-1}$, which is
immediate from~\eqref{equation:Wzstablecondition}.
Otherwise, we first show that, for all $i'$, $i < i' < j$, 
$V_{d-i', r_{d-i'}} = R_{j-i'} V_{d-j, 1 + r_{d-j}}$. 
Both the vector-spaces belong to $\mathcal F$, so they are comparable 
(with respect to inclusion). Minimality of $j-i$  implies 
$V_{d-i', r_{d-i'}} \subseteq R_{j-i'} V_{d-j, 1 + r_{d-j}}$. 
If $V_{d-i', r_{d-i'}} \subsetneq R_{j-i'} V_{d-j, 1 + r_{d-j}}$, then 
$V_{d-i', 1+ r_{d-i'}} \subseteq R_{j-i'} V_{d-j, 1 + r_{d-j}}$, so,
using~\eqref{equation:Wzstablecondition}, we see that
$R_{i'-i}V_{d-i', 1+ r_{d-i'}} \subseteq R_{j-i} V_{d-j, 1 + r_{d-j}}
\subsetneq V_{d-i, r_{d-i}}$, contradicting the
minimality of $j-i$. Taking $i' = i+1$ and $i' = j-1$, now, completes the
proof of the assertion that $\tilde W$ is $z$-stable.
\end{proof}

\begin{lemma}
\label{lemma:segsLengthComparison}
Let $W, W'$ be segments in $S_d$. If $|W| \leq |W'|$, then $W \subseteq W'$.
In particular, for every $1 \leq s \leq |S_d|$, there exists a unique segment
of length $s$.
\end{lemma}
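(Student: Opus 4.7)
The plan is to reduce the containment $W \subseteq W'$ to a coordinatewise inequality of the two $R$-coefficient sequences. Writing $W = \bigoplus_{i \in \mathfrak{I}} V_{d-i, r_{d-i}} z^i$ and $W' = \bigoplus_{i \in \mathfrak{I}} V_{d-i, r'_{d-i}} z^i$, the fact that all the $V_{d-i,\star}$ lie in the single chain $\mathcal{F}$ makes $W \subseteq W'$ equivalent to $r_{d-i} \leq r'_{d-i}$ for every $i \in \mathfrak{I}$. So the main statement becomes: if $\sum_i r_{d-i} \leq \sum_i r'_{d-i}$, then $r_{d-i} \leq r'_{d-i}$ for every $i$.

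I would argue by contradiction with a sandwich that plays the segment condition on one of $W, W'$ against the $z$-stability of the other. Assume some $i_0$ satisfies $r_{d-i_0} > r'_{d-i_0}$; since $|W| \leq |W'|$, there must exist $j_0 \neq i_0$ with $r_{d-j_0} < r'_{d-j_0}$. Consider first $j_0 > i_0$. The segment condition on $W$ at the pair $(i_0, j_0)$, together with $1 + r_{d-j_0} \leq r'_{d-j_0} \leq |R_{d-j_0}|$ simplifying the minimum, yields
\[
V_{d-i_0, r_{d-i_0}} \subseteq R_{j_0 - i_0}\, V_{d-j_0, r'_{d-j_0}}.
\]
Iterating the $z$-stability of $W'$ pushes the right side into $V_{d-i_0, r'_{d-i_0}}$, giving $V_{d-i_0, r_{d-i_0}} \subseteq V_{d-i_0, r'_{d-i_0}}$, which contradicts $r_{d-i_0} > r'_{d-i_0}$. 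The case $j_0 < i_0$ is entirely symmetric: swap the roles of $W$ and $W'$, apply the segment condition to $W'$ at $(j_0, i_0)$, and use the iterated $z$-stability of $W$ to obtain $V_{d-j_0, r'_{d-j_0}} \subseteq V_{d-j_0, r_{d-j_0}}$, contradicting $r_{d-j_0} < r'_{d-j_0}$.

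For the ``in particular'' statement, uniqueness of a segment of length $s$ is immediate from antisymmetry applied to the first half. Existence I would handle separately: a $z$-stable subspace of $S_d$ of any prescribed dimension $s$ is easy to produce (for instance, greedily fill $R_d,\ R_{d-1}z,\ R_{d-2}z^2,\ldots$ until the running dimension equals $s$, each such truncation being trivially $z$-stable), and then Lemma~\ref{lemma:zStableMinSegment} applied to a $\lessdot$-minimal $z$-stable subspace of dimension $s$ produces an actual segment, since the replacement operation in that lemma preserves total dimension.

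The main obstacle, though mild, is the bookkeeping of the minimum: the strict inequality $r_{d-j_0} < r'_{d-j_0} \leq |R_{d-j_0}|$ is exactly what is needed to force $\min\{1+r_{d-j_0}, |R_{d-j_0}|\} = 1 + r_{d-j_0}$ and to keep that value $\leq r'_{d-j_0}$, so that $V_{d-j_0, \min\{\cdots\}} \subseteq V_{d-j_0, r'_{d-j_0}}$ holds in $\mathcal{F}$. Beyond this, the argument uses only the segment hypothesis, $z$-stability, and the chain structure of the embedding filtration.
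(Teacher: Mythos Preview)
Your proof is correct and follows essentially the same approach as the paper's. Both arguments reduce to comparing the indices $r_{d-i}$ and $r'_{d-i}$ (the paper writes $s_{d-i}$ for your $r'_{d-i}$), assume a violation, and combine the segment condition on one of $W,W'$ with iterated $z$-stability of the other to derive a contradiction; the paper first shows that a violation at $i$ forces $r_{d-i'}\geq s_{d-i'}$ for all $i'<i$ (so the compensating index must lie above $i$) and then reverses roles, whereas you split directly on whether the compensating index $j_0$ lies above or below $i_0$ --- a purely organizational difference. Your handling of existence via Lemma~\ref{lemma:zStableMinSegment} is also what the paper does, and your remark that the replacement in that lemma preserves total dimension is exactly the point being used.
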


\begin{proof}
Write $W = \bigoplus_{i \in \mathfrak I}V_{d-i,r_{d-i}}z^i$ and 
$W' = \bigoplus_{i \in \mathfrak I}V_{d-i,s_{d-i}}z^i$. 
We need to show that 
$r_{d-i} \leq s_{d-i}$ for all $i \in \mathfrak I$. 
Assume, by way of contradiction, 
that there exists $i$ such that $r_{d-i} > s_{d-i}$. Then we
observe that $1+s_{d-i} \leq |R_{d-i}|$ 
and, hence, that
\[
V_{d-i, s_{d-i'}} \subseteq R_{i-i'}V_{d-i,1+s_{d-i}} \subseteq
R_{i-i'}V_{d-i,r_{d-i}} \subseteq V_{d-i', r_{d-i'}},
\]
for all $i' < i$. Hence $s_{d-i'} \leq r_{d-i'}$ for all $i' < i$.
However, since $\sum_{j \in \mathfrak I} r_{d-j} =
\sum_{\substack{j \in \mathfrak I \\ j \leq d}} r_{d-j} \leq
\sum_{\substack{j \in \mathfrak I \\ j \leq d}} s_{d-j} =
\sum_{j \in \mathfrak I} s_{d-j}$,
there exists $j > i$ such that $s_{d-j} > r_{d-j}$. 
Repeating the above argument, now reversing the roles of $W$ and $W'$, we
see that $s_{d-i} > r_{d-i}$, contradicting our assumption.
Hence $r_{d-i} \leq s_{d-i}$ for all $i \in \mathfrak I$. 

Let $1 \leq s \leq |S_d|$. The existence of a segment of length $s$ follows
from Lemma~\ref{lemma:zStableMinSegment}. Uniqueness is immediate from the
first assertion of this lemma.
\end{proof}

\begin{proof}[Proof of Theorem~\protect{\ref{thm:extnRings}}]
Let $H \in \hilbertPoset_S$. Let $I$ be a $z$-stable $S$-ideal such
that $H_{I} = H$. Let $d \in \naturals$. Let $J_d$ be the (unique) segment
of $S_d$ of length $|I_d|$, which exists by 
Lemma~\ref{lemma:segsLengthComparison}.
By Observation~\ref{obs:zStableGrowth},
$S_1J_d$ is a segment of $S_{d+1}$. 
By the minimality of $d_R(J_d)$ (among all the $z$-stable subspaces of
$S_d$ of length $|I_d|$) and Observation~\ref{obs:zStableGrowth}, we see
that 
$|S_1J_d| \leq |S_1I_d| \leq |I_{d+1}| = |J_{d+1}|$
and that $S_1J_d$ is a segment of $S_{d+1}$. 
Hence, by Lemma~\ref{lemma:segsLengthComparison}, $S_1J_d
\subseteq J_{d+1}$. Therefore $J = \bigoplus_d J_d$ is an $S$-ideal. Now
the map $\epsilon : \hilbertPoset_S \longrightarrow \idealPoset_R$ sending
$H \mapsto J$ is an embedding.
\end{proof}

We note that Lemmas~\ref{lemma:zStableMinSegment}
and~\ref{lemma:segsLengthComparison} makes no reference to the hypothesis
of Theorem~\ref{thm:extnRings}. Consequently, applying them and arguing as
in the proof of Theorem~\ref{thm:extnRings}, we obtain the following:

\begin{thm}
\label{theorem:strongHyp}
Let $t \in \naturals \cup \{\infty\}$ and $S = R[z]/(z^t)$. Suppose that
$\hilbertPoset_R$ admits an embedding $\epsilon$. Let $I$ be a $z$-stable
$S$-ideal and $J = \epsilon(H_I)$. Then
$H_{(I,z^i)} \succcurlyeq H_{(J,z^i)}$ for all $i \in \mathfrak I$.
\end{thm}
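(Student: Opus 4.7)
The plan is to reduce the claim to a partial-sum dominance $d_R(J_d) \lessdot d_R(I_d)$ between the $R$-coefficient sequences of $I_d$ and $J_d$ in each degree $d$, and then to establish that dominance by iterating the exchange move from the proof of Lemma~\ref{lemma:zStableMinSegment}. Concretely, $J$ should be read as the segment $S$-ideal with $|J_d|=|I_d|$ for every $d$, which is well-defined by Lemmas~\ref{lemma:zStableMinSegment} and~\ref{lemma:segsLengthComparison} together with the finiteness of $\Lambda_d$.

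For the reduction, write the $R$-coefficient decompositions $I_d = \bigoplus_{j \in \mathfrak I} I(d,j)\, z^j$ and $J_d = \bigoplus_{j \in \mathfrak I} J(d,j)\, z^j$, and note that
\[
(z^i)_d \;=\; \bigoplus_{\substack{j \in \mathfrak I\\ i \leq j \leq d}} R_{d-j}\, z^j.
\]
The $z^j$-component of $(I,z^i)_d$ is therefore $I(d,j)$ for $j<i$ and the full $R_{d-j}$ for $i \leq j \leq d$ with $j \in \mathfrak I$, and likewise for $J$. Subtracting, the indices $j \geq i$ cancel and
\[
|(I,z^i)_d|-|(J,z^i)_d| \;=\; \sum_{j=0}^{i-1}\bigl(|I(d,j)|-|J(d,j)|\bigr),
\]
which is the $(i-1)$-th coordinate of $d_R(I_d)-d_R(J_d)$. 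So the theorem is equivalent to $d_R(J_d)\lessdot d_R(I_d)$ for every $d$.

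To prove this dominance I would iterate the exchange from Lemma~\ref{lemma:zStableMinSegment}: whenever the current $z$-stable space $W$ fails to be a segment, swap one unit of length from some index $i$ to some $j>i$ as in that proof. A direct bookkeeping shows that the new $d_R$ strictly drops (by $1$) at coordinates $i, i+1, \ldots, j-1$ and is unchanged elsewhere, while the total length $|W|$ is preserved. Since $\Lambda_d$ is finite, the iteration terminates; the terminal object has minimal $d_R$ among reachable $z$-stable spaces, hence is a segment by Lemma~\ref{lemma:zStableMinSegment}, and by uniqueness (Lemma~\ref{lemma:segsLengthComparison}) must equal $J_d$. This gives $d_R(J_d)\lessdot d_R(I_d)$ and, combined with the displayed identity, yields $H_{(I,z^i)}\succcurlyeq H_{(J,z^i)}$ for every $i \in \mathfrak I$. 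The main obstacle is the bookkeeping of the exchange step: one has to check that every intermediate $\widetilde W$ produced by the move is again $z$-stable (so that the iteration is legal and Lemma~\ref{lemma:zStableMinSegment} remains applicable), and to verify the precise coordinate-by-coordinate change of $d_R$. Once those checks are in place, the rest of the argument is formal.
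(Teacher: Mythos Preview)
Your proposal is correct and matches the paper's approach: both reduce $H_{(I,z^i)} \succcurlyeq H_{(J,z^i)}$ to the partial-sum dominance $d_R(J_d) \lessdot d_R(I_d)$ in each degree and obtain the latter from the construction underlying Lemma~\ref{lemma:zStableMinSegment}. One small point you should make explicit: the exchange move in that lemma is formulated for $z$-stable spaces whose $R$-coefficients already lie in the embedding filtration $\mathcal F$, so before iterating you should first replace each component $I(d,j)$ of $I_d$ by $V_{d-j,\,|I(d,j)|}$; this leaves $d_R$ unchanged and preserves $z$-stability by condition~(ii) of Definition~\ref{defn:embeddingFlag}, after which your iteration goes through verbatim.
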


\begin{proof}
Let $I$ be a $z$-stable $S$-ideal such that $H_{I} = H$. We construct the
ideal $J$ by minimizing the function $d_R(J_d)$ for all $d$ as in the proof
of Theorem~\ref{thm:extnRings}; see the proof of
Lemma~\ref{lemma:zStableMinSegment}. Note that $J$
depends only on $H$. Let $i \in \mathfrak I$ and $d \in \naturals$.
We want to show that $|(I,z^i)_d| \geq |(J,z^i)_d|$. 
Let $(W_{d-j})_{j\in \mathfrak I}$ and
$(V_{d-j,r_{d-j}})_{j\in \mathfrak I}$ be, respectively,
the $R$-coefficient sequences of $I_d$ and $J_d$.
Then $|(I,z^i)_d| \geq |(J,z^i)_d|$ if and only if 
\[
\sum_{\substack{j \in \mathfrak I \\ j < i}} |W_{d-j}| \geq
\sum_{\substack{j \in \mathfrak I \\ j < i}} r_{d-j}
\]
which, indeed, is true by the construction of $J_d$.
\end{proof}

In Discussion~\ref{discussion:embedOrders} we noted that if 
$R$ is defined by a monomial ideal that it is
an affine semigroup algebra all of whose
generators are of the same degree, $\hilbertPoset_R$ admits an embedding if
and only if there exists an embedding order on $R$. In this situation, we
can strengthen Theorem~\ref{thm:extnRings},  to conclude 
that if the embedding order on $R$ is induced by a graded lexicographic 
order on $A$ (in the sense of Proposition~\ref{thm:multOrderIsLex}), 
then the extension to
$S$ is induced by a graded lexicographic order on $B$.
\begin{defn}
\label{defn:extnRingsEmbOrder}
Let $(\mathcal B, \sigma)$ be an embedding order on $R$.
Let $\mathcal B' = \{fz^i : f \in \mathcal B, i\in \mathfrak I\}$.
Define a total order $\tau$ on $\mathcal B'$ as follows.
Let $fz^a, gz^b \in \mathcal B'$ with $f, g \in \mathcal B$, $\deg f + a =
\deg g + b$ and $a \leq b$.
Set 
$fz^a \prec_\tau gz^b$ if there exists $g' \in \mathcal B$ with  $\deg g' = \deg g$
such that
$f \in A_{b-a}g'$ and  $g' \prec_\sigma g$. Otherwise set $gz^b   \prec_\tau  fz^a.$
\end{defn}

\begin{thm}
\label{thm:extnRingsEmbOrder}
With notation as above, $(\mathcal B', \tau)$ is an embedding order for
$S$; the embedding of $\hilbertPoset_S$ from Theorem~\ref{thm:extnRings} is
induced by $(\mathcal B', \tau)$. Moreover, if $x_1, \ldots, x_n$ are the
variables of $A$ and $(\mathcal B, \sigma)$ is a monomial order with $x_1
\prec_\sigma \cdots \prec_\sigma x_n$ then $(\mathcal B', \tau)$ is a
monomial order with $x_1 \prec_\tau \cdots \prec_\tau x_n \prec_\tau z$.
\end{thm}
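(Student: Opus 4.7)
The plan is to identify the $\tau$-segments of $S_d$ with the segments defined earlier in this section, and then to deploy Lemma \ref{lemma:segsLengthComparison}, Observation \ref{obs:zStableGrowth} and the minimality-of-$d_R$ argument from the proof of Theorem \ref{thm:extnRings}. First I verify that $\tau$ is a strict total order on each $\mathcal B'_d$: when $a = b$ the definition of $\prec_\tau$ collapses to $\prec_\sigma$ (which is trichotomous), while for $a < b$ the clause depends only on $(f, g, a, b)$ so exactly one direction is assigned. Transitivity reduces, by splitting on the triple of $z$-exponents, to transitivity of $\sigma$ together with the observation that a divisor (in $A$) of a divisor is a divisor.

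The main step is to show, by induction on $s$, that the $\tau$-segment of $S_d$ of dimension $s$ is a segment of $S_d$ in the technical sense used in this section. Assuming the segment $U$ of length $s-1$ has the form $U = \bigoplus_i V_{d-i, r_{d-i}} z^i$ and that $\phi(f)z^a$ is the $\tau$-next element, I would verify that (i) adjoining $\phi(f)z^a$ keeps $U$ in the form $\bigoplus_i V_{d-i, r'_{d-i}} z^i$ relative to $\mathcal F$ (so only the $r_{d-a}$ entry changes, to $r_{d-a}+1$), (ii) $z$-stability $R_1 V_{d-a, r_{d-a}+1} \subseteq V_{d-a+1, r_{d-a+1}}$ continues to hold, and (iii) the segment inclusions $V_{d-a, r_{d-a}+1} \subseteq R_{j-a} V_{d-j, 1+r_{d-j}}$ for $j > a$ remain satisfied. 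The condition on $g'$ in Definition \ref{defn:extnRingsEmbOrder} is calibrated so that precisely the smallest $\tau$-candidate meeting these three requirements comes next: the existence of a $\sigma$-smaller $g'$ of degree $\deg g$ dividing $f$ in $A$ is exactly the combinatorial signature of the segment-inclusion chain in the embedding filtration of $R$.

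Once the identification is established, Observation \ref{obs:zStableGrowth} gives that $S_1 U$ is a segment of $S_{d+1}$, hence (by Lemma \ref{lemma:segsLengthComparison}) the $\tau$-segment of $S_{d+1}$ of dimension $|S_1 U|$. The minimality-of-$d_R$ argument from the proofs of Lemma \ref{lemma:zStableMinSegment} and Lemma \ref{lemma:segsLengthComparison}, combined with the embedding-order property of $\sigma$ on $R$, yields $|S_1 U| \leq |S_1 W|$ for every $\Bbbk$-subspace $W \subseteq S_d$ with $|W| = |U|$. Hence $(\mathcal B', \tau)$ is an embedding order on $S$; and the embedding it induces sends $H \in \hilbertPoset_S$ to the direct sum of the $\tau$-segments of $S_d$ of dimension $H^d$, which by Lemma \ref{lemma:segsLengthComparison} coincides with the ideal produced by Theorem \ref{thm:extnRings}.

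For the monomial-order assertion, I would verify the two defining conditions on $(\mathcal B', \tau)$ directly. Closure of $\mathcal B'$ under divisors in $B$ follows from the corresponding property of $\mathcal B$ in $A$ together with $\mathfrak I$ being downward closed. Multiplicativity reduces to the cases of dividing or multiplying by a single $x_i$ or by $z$: for $x_i$, both the divisibility clause $f \in A_{b-a} g'$ and the $\sigma$-comparison are preserved by simultaneous division by $x_i$ (using the monomial-order hypothesis on $\sigma$); for $z$, both exponents $a$ and $a'$ shift equally, leaving $b - a$ and the set of admissible $g'$ unchanged. The chain $x_1 \prec_\tau \cdots \prec_\tau x_n$ reduces to the $a = b = 0$ case of Definition \ref{defn:extnRingsEmbOrder} (which returns $\sigma$), and the relation $x_n \prec_\tau z$ is a direct application of the definition. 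The chief obstacle throughout is the central inductive identification in the second paragraph: threading the combinatorial condition on $g'$ through the three simultaneous requirements of $\mathcal F$-consistency, $z$-stability, and the segment-inclusion chain.
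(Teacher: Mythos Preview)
Your proposal is correct and follows essentially the same route as the paper: identify the $\tau$-segments of $S_d$ with the segments of Section~\ref{sec:extnRings} and then invoke the existing machinery (Observation~\ref{obs:zStableGrowth}, Lemma~\ref{lemma:segsLengthComparison}, and the argument of Theorem~\ref{thm:extnRings}), with the monomial-order clause handled by direct verification from Definition~\ref{defn:extnRingsEmbOrder}. The paper's execution is marginally more streamlined---rather than inducting on $s$, it uses Lemma~\ref{lemma:segsLengthComparison} to produce segments $W \subsetneq W'$ with $W' = W + \Bbbk\cdot\langle\phi(gz^b)\rangle$, derives $V_{d-i,r_{d-i}} = R_{b-i}V_{d-b,1+r_{d-b}}$ for $i<b$ from the segment conditions on $W$ and $W'$, and reads off directly that $\phi(fz^a)\in W$ iff $fz^a \prec_\tau gz^b$---but the content is the same.
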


\begin{proof}
Indeed, $\mathcal B'$ is a standard basis for $S$.
To show that $(\mathcal B', \tau)$ is an embedding order, it suffices to
show that $(\mathcal B', \tau)$ induces the embedding from 
Theorem~\ref{thm:extnRings}.

Recall that $B = A[z]$; write $\phi$ for the surjective 
homomorphism $B \to S$.
Consider $f, g \in \mathcal B, f \neq g$ and $0 \leq a \leq b < t$ with $\deg
f + a = \deg g + b = d$. 
By Lemma~\ref{lemma:segsLengthComparison}, there exist 
segments $W \subsetneq W' \subseteq S_d$ such that 
$W'=W+ \Bbbk \cdot \langle \phi(gz^b) \rangle$.
As in Definition~\ref{defn:coeffSeq}
write $W = \bigoplus_{i\in \mathfrak I}V_{d-i,r_{d-i}}z^i$ 
(with $R_{j-i}V_{d-j,r_{d-j}} \subseteq V_{d-i, r_{d-i}} \subseteq
R_{j-i}V_{d-j,1+r_{d-j}}$, for all $0 \leq i < j < t$
).
Hence 
\[W' = \bigoplus_{i=0}^{b-1}V_{d-i,r_{d-i}}z^i \bigoplus
V_{d-b,1+r_{d-b}}z^b \bigoplus 
\bigoplus_{\substack{i \in \mathfrak I \\ i > b}}
V_{d-i,r_{d-i}}z^i.
\]
Since $W'$ is a segment, we see that 
$V_{d-i,r_{d-i}} = R_{b-i}V_{d-b,1+r_{d-b}}$ for all $0 \leq i \leq b-1$
and that $V_{d-b, 1 + r_{d-b}} \subseteq 
R_{j-b}V_{d-j,1+r_{d-j}}$, for all $b+1 \leq j < t$.
Note that 
$V_{d-b,1+r_{d-b}} = \Bbbk \cdot \langle \phi(g) \rangle \bigoplus V_{d-b,r_{d-b}}$.

Since 
$a \leq b$, $\phi(fz^a) \in W$ if and only if $f \in A_{b-a}g'$ for
some monomial $g' \in \mathcal B$ with $\deg g' = \deg g$ and $g'
\prec_\sigma g$, i.e., if and only if $fz^a \prec_\tau gz^b$, so the
embedding of $\hilbertPoset_S$ is induced by $\tau$.
Now, in order to prove that $(\mathcal B', \tau)$ is a monomial order if
$(\mathcal B, \sigma)$ is, consider, as above, 
$f, g \in \mathcal B$ and $0 \leq a \leq b < t$ with $\deg f + a
= \deg g + b = d.$ When  $fz^a \prec_\tau gz^b,$
$h \in \mathcal B$ and $0 \leq c+a < t$, 
it follows directly from Definition~\ref{defn:extnRingsEmbOrder} that 
$fhz^{a+c} \prec_\tau
ghz^{b+c}$. On the other hand if $gz^b \prec_\tau fz^a$ we know that whenever 
$g' \in \mathcal B$ with  $\deg g' = \deg g$ and $f \in A_{b-a}g'$ we also must have  $g \prec_\sigma g',$ hence  
$ghz^{b+c} \prec_\tau fhz^{a+c}.$
\end{proof}

\subsection*{Distractions}

A distraction is a $\Bbbk$-linear automorphism $\phi$ of a polynomial ring
that has the property that if $m$ and $n$ are monomials and $m$ divides
$n$, , then $\phi(m)$ divides $\phi(n)$.
We follow the formulation of \cite{BCRdistr05}, and show that embedding
the Hilbert functions can be extended to distractions. (For earlier work
using distractions, see~\cite{BCRdistr05}*{Introduction}.) We consider a
special case, and describe how embeddings of Hilbert series can be extended
to distractions.

\begin{notation}
Let $\Bbbk$ be an infinite field and $A = \Bbbk[x_1, \ldots, x_n]$, with
$\naturals^2$-grading given by $\deg x_1 = (1,0)$ and $\deg x_i = (0,1)$,
for all $2 \leq i \leq n$. Let $\mathfrak a$ be a multigraded ideal.
As earlier, $R = A/\mathfrak a$.
\end{notation}
    
\begin{defn}[\protect\cite{BCRdistr05}*{Definitions~2.1,~2.2}] 
\label{defn:distraction}
Let $N \in \naturals$. A \define{distraction matrix} is an infinite matrix $L
= \left(l_{i,j}: 1 \leq i \leq n, j \in \naturals \right)$ such that 
\begin{inparaenum}
\item $l_{i,j} \in A_1$ for all $i,j$,
\item for all $j_1, \ldots, j_n \in \naturals$, $\{l_{1,j_1}, \ldots,
l_{n,j_n}\}$ spans $A_1$ (as a $\Bbbk$-vector-space), and,
\item there exists $N \in \naturals$ such that $l_{i,j} = l_{i,N}$ for all
$j \geq N$.
\end{inparaenum}
The \define{distraction} associated to $L$, $D_L$, is the
$\Bbbk$-vector-space morphism $D_L : A \longrightarrow A$ such that
$\prod_{i=1}^n x_i^{a_i} \mapsto \prod_{i=1}^n \prod_{j=1}^{a_i} l_{i,j}$.
\end{defn}

\begin{remark}
\label{remark:distractionPreservesDirectSums}
We observe that, for all $d \in \naturals$, 
$D_L|_{A_d} : A_d \to A_d$ is an isomorphism of vector spaces. Therefore
for all subspaces $V, V'$ of $A_d$ with $V \cap V' = 0$, $D_L(V+V') =
D_L(V) \oplus D_L(V')$. In particular, $D_L$ preserves Hilbert functions.
\end{remark}

In~\cite{BCRdistr05}, the authors consider monomial ideals. We, however,
need the results in bigraded ideals. The following lemma can be proved
following their ideas, but for the sake of completeness, we include a
proof. We will use this lemma, again, in the proof of
Theorem~\ref{thm:polzEmb}.

\begin{lemma}
\label{lemma:bigradedDistr}
Let $\mathfrak a$ be a multigraded ideal, and $L = \left(l_{i,j}: 1 \leq i
\leq n, j \in \naturals \right)$ be a distraction matrix such that $l_{i,j}
= x_i$ for all $2 \leq i \leq n$ and for all $j \in \naturals$. Then
$D_L(\mathfrak a)$ is an ideal.
\end{lemma}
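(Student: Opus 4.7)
The plan is to exploit the bigrading to split $\mathfrak{a}$ into an $x_1$-tower of genuine ideals of $A' := \Bbbk[x_2,\ldots,x_n]$, observe that $D_L$ acts as the identity on $A'$ (since $l_{i,j}=x_i$ for $i \geq 2$), and then verify closure under multiplication by each variable.

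Concretely, I would begin by writing $A = \bigoplus_{p \in \naturals} x_1^p A'$ and noting that a multigraded (in the given $\naturals^2$-grading) ideal $\mathfrak{a}$ decomposes as $\mathfrak{a} = \bigoplus_{p} x_1^p J_p$ for some homogeneous $\Bbbk$-subspaces $J_p \subseteq A'$. Closure of $\mathfrak{a}$ under multiplication by $x_i$ for $i \geq 2$ forces each $J_p$ to be an ideal of $A'$, and closure under multiplication by $x_1$ forces the chain $J_0 \subseteq J_1 \subseteq J_2 \subseteq \cdots$. Setting $L_p := \prod_{j=1}^{p} l_{1,j} \in A$, the hypothesis on $L$ gives $D_L(x_1^p f) = L_p f$ for every $f \in A'$, so
\[
D_L(\mathfrak{a}) \;=\; \bigoplus_{p \in \naturals} L_p \cdot J_p.
\]

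Next I would verify that this sum is stable under multiplication by each variable. For $i \geq 2$ this is immediate: $x_i \cdot L_p J_p = L_p \cdot (x_i J_p) \subseteq L_p J_p$ since $J_p$ is an $A'$-ideal. The interesting case is multiplication by $x_1$, and this is where the defining property of a distraction matrix is used. Write $l_{1,p+1} = a\, x_1 + m$ with $a \in \Bbbk^\times$ (nonzero because $\{l_{1,p+1}, x_2, \ldots, x_n\}$ must span $A_1$) and $m \in A'_1$. Then $x_1 = a^{-1}(l_{1,p+1} - m)$, so for $f \in J_p$,
\[
x_1 \cdot L_p f \;=\; a^{-1}\bigl(L_{p+1}\, f \;-\; L_p \cdot (m f)\bigr).
\]
The first term lies in $L_{p+1} J_{p+1}$ because $J_p \subseteq J_{p+1}$, and the second lies in $L_p J_p$ because $m \in A'_1$ and $J_p$ is an $A'$-ideal; both summands belong to $D_L(\mathfrak{a})$.

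I don't anticipate a serious obstacle here; the only subtlety is being careful that $D_L$ is \emph{not} bigraded (the $l_{1,j}$ are general linear forms), so one cannot prove the claim degree-by-degree in the bigrading — the whole point of the $\naturals^2$-decomposition is to factor $\mathfrak{a}$ before applying $D_L$, after which a single algebraic identity expressing $x_1$ in terms of $l_{1,p+1}$ handles the one nontrivial closure check.
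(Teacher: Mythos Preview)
Your proposal is correct and is essentially the paper's own argument: the paper likewise decomposes $\mathfrak a=\bigoplus_j \mathfrak a_{(j)}x_1^{\,j}$ with $\mathfrak a_{(j)}=(\mathfrak a:_A x_1^{\,j})\cap\Bbbk[x_2,\ldots,x_n]$ an ascending chain of $A'$-ideals, computes $D_L(\mathfrak a)=\bigoplus_j \mathfrak a_{(j)}\prod_{t\le j}l_{1,t}$, and handles closure under $x_1$ by expressing $x_1$ as a combination of $l_{1,j+1}$ and $x_2,\ldots,x_n$. Your write-up is slightly more explicit about this last step (writing $l_{1,p+1}=ax_1+m$ and solving), but the idea is identical.
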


\begin{proof}
Let $\mathfrak a_{(j)} = ((I \!:_A x_1^j) \cap \Bbbk[x_2, \ldots x_n]), j
\in \naturals$. They are ideals in $\Bbbk[x_2, \ldots x_n]$, and,
as $\Bbbk$-vector-spaces, $\mathfrak a = \bigoplus_{j \in
\naturals} \mathfrak a_{(j)}x_1^j$.
Hence, by Remark~\ref{remark:distractionPreservesDirectSums},
$D_L(\mathfrak a) = \bigoplus_{j \in
\naturals} \left(\mathfrak a_{(j)}\prod_{t=1}^j l_{1,t}\right)$. 
Since the $\mathfrak a_{(j)}$ are ideals, note that $x_i D_L(\mathfrak a)
\subseteq D_L(\mathfrak a)$ for $2 \leq i \leq n$.
To finish the proof, it suffices to show
that $x_1 \mathfrak a_{(j)}\prod_{t=1}^j l_{1,t} \subseteq 
D_L(\mathfrak a)$. Let $f \in \mathfrak a_{(j)}$. We want to show that 
$fx_1\prod_{t=1}^j l_{1,t} \in D_L(\mathfrak a)$. Since 
$\mathfrak a_{(j)} \subseteq \mathfrak a_{(j+1)}$, we know that 
$f \prod_{t=1}^{j+1} l_{1,t} \in D_L(\mathfrak a)$. Moreover, 
$f x_i \prod_{t=1}^{j} l_{1,t} \in D_L(\mathfrak a)$
for all $2 \leq i \leq n$ (since 
$x_i\mathfrak a_{(j)}  \subseteq \mathfrak a_{(j)}$). Therefore
$fx_1\prod_{t=1}^j l_{1,t} \in D_L(\mathfrak a)$. 
\end{proof}

\begin{propn}
\label{thm:distrEmb}
Let $\mathfrak a$ be a multigraded ideal, and $L = \left(l_{i,j}: 1 \leq i
\leq n, j \in \naturals \right)$ be a distraction matrix such that $l_{i,j}
= x_i$ for all $2 \leq i \leq n$ and for all $j \in \naturals$.  If
$\hilbertPoset_R$ admits an embedding, then
$\hilbertPoset_{\frac{A}{D_L(\mathfrak a)}}$ admits an embedding.
\end{propn}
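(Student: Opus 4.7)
The plan is to reduce to the multigraded setting by passing to $\omega$-initial ideals for a well-chosen weight $\omega$, and then to transfer the given embedding from $R$ to $R' := A/D_L(\mathfrak a)$ by applying $D_L$ directly. Let $\omega$ be the weight on $A$ assigning weight $1$ to $x_1$ and weight $0$ to $x_2, \ldots, x_n$. The spanning condition in Definition~\ref{defn:distraction}, combined with the hypothesis that $l_{i,j} = x_i$ for $i \geq 2$, forces each $l_{1,j}$ to have nonzero $x_1$-coefficient, so for every $f \in \Bbbk[x_2, \ldots, x_n]$ and $j \in \naturals$, the $\omega$-initial form of $f \prod_{t=1}^j l_{1,t}$ is a nonzero scalar multiple of $f x_1^j$. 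From this I would first deduce that $\mathfrak a \subseteq \In_\omega(D_L(\mathfrak a))$, and hence that for every $A$-ideal $I' \supseteq D_L(\mathfrak a)$, the initial ideal $\In_\omega(I')$ is an $A$-ideal containing $\mathfrak a$ with the same Hilbert series. In particular $\hilbertPoset_{R'} \subseteq \hilbertPoset_R$, so the given embedding $\epsilon$ is defined on every $H \in \hilbertPoset_{R'}$.

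Starting from $\epsilon$, I would modify it to an embedding $\tilde\epsilon$ of $\hilbertPoset_R$ whose representatives are multigraded. For each $H$, lift $\epsilon(H)$ to the unique $A$-ideal $\widehat{\epsilon(H)} \supseteq \mathfrak a$ with image $\epsilon(H)$ in $R$, and set $\tilde\epsilon(H) := \In_\omega(\widehat{\epsilon(H)})$. This is a multigraded $A$-ideal containing $\mathfrak a$ (since $\In_\omega(\mathfrak a) = \mathfrak a$), and because $\In_\omega$ preserves Hilbert series and inclusions of $\Bbbk$-subspaces in each graded piece, the assignment $H \mapsto \tilde\epsilon(H)/\mathfrak a$ is again an embedding of $\hilbertPoset_R$ into $\idealPoset_R$, now with multigraded representatives.

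Next I would define $\epsilon' : \hilbertPoset_{R'} \longrightarrow \idealPoset_{R'}$ by $\epsilon'(H) := D_L(\tilde\epsilon(H))/D_L(\mathfrak a)$. The proof of Lemma~\ref{lemma:bigradedDistr} uses only that its input ideal is multigraded, so applying it to $\tilde\epsilon(H)$ shows that $D_L(\tilde\epsilon(H))$ is an $A$-ideal, and it contains $D_L(\mathfrak a)$ by $\Bbbk$-linearity of $D_L$ together with the ideal containment $\mathfrak a \subseteq \tilde\epsilon(H)$. By Remark~\ref{remark:distractionPreservesDirectSums}, $D_L$ restricts to a $\Bbbk$-linear automorphism of each $A_d$, hence preserves Hilbert series and inclusions of subspaces; thus $\epsilon'$ satisfies $\mathbf{H} \circ \epsilon' = \mathrm{id}$ and is order-preserving, so it is an embedding.

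The only step that requires genuine verification is the second paragraph — that replacing $\widehat{\epsilon(H)}$ by its $\omega$-initial ideal preserves both the Hilbert-series identity and order-preservation. This is the crux of the argument but follows immediately from the standard fact that $V \subseteq W$ implies $\In_\omega(V) \subseteq \In_\omega(W)$ in each graded piece, so I do not anticipate any substantive obstacle: the entire proof is orchestrated by (a) weighted initial ideals to enforce multigradedness and (b) Lemma~\ref{lemma:bigradedDistr} together with Remark~\ref{remark:distractionPreservesDirectSums} to transport the embedding across $D_L$.
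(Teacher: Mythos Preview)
Your proposal is correct and follows essentially the same approach as the paper: both use the weight $\omega(x_1)=1$, $\omega(x_i)=0$ for $i\ge 2$ to identify $\In_\omega(D_L(\mathfrak a))$ with $\mathfrak a$, apply the given embedding $\epsilon$ on the $R$-side, and then push the result back through $D_L$. You are in fact more explicit than the paper on one point---you pass to $\In_\omega(\widehat{\epsilon(H)})$ so that the ideal fed into $D_L$ is multigraded, as Lemma~\ref{lemma:bigradedDistr} requires---whereas the paper's formula $H\mapsto (D_L(\epsilon(\In_\omega I)))S$ leaves this step tacit.
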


\begin{proof}
(Indeed, by Lemma~\ref{lemma:bigradedDistr}, $D_L(\mathfrak a)$ is an
ideal.)
Write $S = \frac{A}{D_L(\mathfrak a)}$.
Let $\omega$ be the weight order with $w(x_1) = 1$ and $w(x_i) = 0$ for all
$2 \leq i \leq n$.
Then $\In_\omega\left(D_L(\mathfrak a)\right) =
\mathfrak a$. (To see this, note that it is enough to show that 
$\mathfrak a \subseteq \In_\omega\left(D_L(\mathfrak a)\right)$. Let $f \in 
\mathfrak a$ be a multigraded element with $\deg f = (a,b)$. 
Write $f = x_1^ag$, with $g$ a homogeneous polynomial of degree $b$ in
$x_2, \ldots, x_n$. Then $D_L(f) = (\prod_{j=1}^a l_{1,j}) \cdot g$. 
Note that $x_1^a$ appears with a non-zero coefficient in 
$\prod_{j=1}^a l_{1,j}$. Hence
$f = \In_\omega(D_L(f)) \in \In_\omega\left(D_L(\mathfrak a)\right)$.)
Let $H \in \hilbertPoset_S$. 
Let $I \in \idealPoset_{A}$ be such that
$D_L(\mathfrak a) \subseteq I$ and 
$H_{IS} = H$. Then $\mathfrak a \subseteq \In_\omega(I)$.
Define $\epsilon' : \hilbertPoset_{S} \longrightarrow 
\idealPoset_{A/ D_L(\mathfrak a)}$
by sending 
$H \mapsto (D_L(\epsilon(\In_\omega(I))))S$; this is an embedding.
\end{proof}

\begin{remarkbox}
The same proof will work for a more general distraction matrix, in which,
for all $i$ and $j$, $l_{i,j}$ is a linear form in $x_i, x_{i+1}, \ldots,
x_n$, with $x_i$ appearing with a non-zero coefficient. However, unlike
polarization (discussed below), where working with one variable generalizes
to the general case, the distraction matrix in
Proposition~\ref{thm:distrEmb} is not general.
\end{remarkbox}

\subsection*{Polarization}
\label{sec:polarization}

We use Theorem~\ref{thm:extnRings} to show that polarization preserves
embeddability of Hilbert functions. Let $A = \Bbbk[x_1, \ldots, x_n, y]$ be
a polynomial ring.  Polarization~\cite{MiStCCA05}*{Exercise~3.15} is an
operation that converts an $A$-ideal $\mathfrak a$ to an $A[z]$-ideal
$\mathfrak b$. We will show that every embedding of
$\hilbertPoset_{A/\mathfrak a}$ gives rise to an embedding of
$\hilbertPoset_{A[z]/\mathfrak b}$.  Any polarization can be achieved by
repeatedly applying partial polarizations, so we will restrict our
discussion to this case.

\begin{notation}
\label{notation:polazSecNotation}
Let $A$ and $z$ be as above. We give $A$ the $\naturals^2$-grading with
$\deg x_i = (1,0)$, for all $i$, and $\deg y = (0,1)$. 
Write $B = A[z]$, graded with $\deg x_i = (1,0,0)$ for all $i$, $\deg y =
(0,1,0)$ and $\deg z = (0,0,1)$. 
Homogeneous
elements, ideals and modules in these gradings will be referred to as
\define{multigraded}. For a multigraded element $f$ (of $A$ or $B$), we
will denote its degrees by $\deg_{\mathbf x} f$, $\deg_y f$ and $\deg_z f$. 
By $\mathfrak a$, we will mean a multigraded $A$-ideal.
\end{notation}

\begin{defn}
\label{defn:polzn}
A \define{polarization} is a $\Bbbk$-vector-space morphism $p_{y,d,z}^A : A
\longrightarrow A[z]$, for some $d \in \naturals$, such that, for all
homogeneous forms $f \in A$ with $\deg_y f= 0$,
\[
p_{y,d,z}^ A: fy^i \mapsto
\begin{cases}
fy^i, & \text{if}\; i < d,  \\
fy^{i-1}z, & \text{otherwise}.
\end{cases}
\]
\end{defn}
\begin{remarkbox}
\label{remark:polznProps}
Let $\mathfrak a$  be a multigraded $A$-ideal, and let 
$\mathfrak b = (p_{y,d,z}^A(\mathfrak a))A[z]$. Then $\mathfrak b $ is a
multigraded $B$-ideal. Moreover, $y-z$ is a non-zero-divisor on
$B/\mathfrak b$. Hence $H_{A/\mathfrak a}(\hssymb) =
(1-\hssymb)H_{B/\mathfrak b}(\hssymb)$.
\end{remarkbox}

\begin{lemma}
\label{lemma:polznInitIdeal}
Let $\omega$ be the weight vector on $B$ with $\omega(x_i) = 1$ for all
$i$, $\omega(y) = 1$ and $\omega(z) = 0$. Let $g : B \longrightarrow B$ be the
$\Bbbk$-algebra morphism induced by the change of coordinates $x_i \mapsto
x_i$ for all $i$, $y \mapsto y$ and $z \mapsto y+z$.
Then $\In_\omega\left(g( p_{y,d,z}(\mathfrak a))\right) = \mathfrak a B$.
\end{lemma}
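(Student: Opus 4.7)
The plan is to prove the two inclusions separately: one by a direct initial-form computation on multigraded generators of $\mathfrak{a}$, and the other by a Hilbert-series comparison that forces equality.

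First, I would show $\mathfrak{a}B \subseteq \In_\omega(g(p_{y,d,z}(\mathfrak{a})))$ by checking it on multigraded elements. Since $\mathfrak{a}$ is multigraded, it is generated (as a $\Bbbk$-vector-space) by elements of the form $f = y^b h(\mathbf{x})$ with $h \in \Bbbk[x_1,\ldots,x_n]$ homogeneous and $b \in \naturals$ fixed. By Definition~\ref{defn:polzn}, $p_{y,d,z}^A(f) = y^b h$ when $b < d$ and $p_{y,d,z}^A(f) = y^{b-1}z\, h$ when $b \geq d$. Applying $g$ substitutes $z \mapsto y+z$, so in the first case $g(p_{y,d,z}^A(f)) = y^b h$, while in the second case
\[
g(p_{y,d,z}^A(f)) = y^{b-1}(y+z)h = y^b h + y^{b-1}z\, h.
\]
Because $\omega(x_i) = \omega(y) = 1$ and $\omega(z) = 0$, the two summands in the second case have $\omega$-weights $|\deg h| + b$ and $|\deg h| + b - 1$, respectively, so in both cases $\In_\omega(g(p_{y,d,z}^A(f))) = y^b h = f$. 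Thus every multigraded element of $\mathfrak{a}$ appears as an initial form of some element of $g(p_{y,d,z}(\mathfrak{a}))$, giving $\mathfrak{a}B \subseteq \In_\omega(g(p_{y,d,z}(\mathfrak{a})))$.

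Next I would close the gap via Hilbert series. Set $\mathfrak{b} = p_{y,d,z}(\mathfrak{a})B$. By Remark~\ref{remark:polznProps}, $y-z$ is a non-zero-divisor on $B/\mathfrak{b}$ and $H_{A/\mathfrak{a}}(\hssymb) = (1-\hssymb)\,H_{B/\mathfrak{b}}(\hssymb)$. Since $g$ is a $\Bbbk$-algebra automorphism of $B$, $H_{B/g(\mathfrak{b})} = H_{B/\mathfrak{b}}$. Because $\omega$ has non-negative entries, the standard flat degeneration $B[\hssymb] \rightsquigarrow \In_\omega$ preserves Hilbert series, so
\[
H_{B/\In_\omega(g(\mathfrak{b}))}(\hssymb) = H_{B/g(\mathfrak{b})}(\hssymb) = \frac{H_{A/\mathfrak{a}}(\hssymb)}{1-\hssymb}.
\]
On the other hand, $B/\mathfrak{a}B \cong (A/\mathfrak{a})[z]$ has Hilbert series $H_{A/\mathfrak{a}}(\hssymb)/(1-\hssymb)$ as well. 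The ideals $\mathfrak{a}B$ and $\In_\omega(g(\mathfrak{b}))$ therefore define quotients with identical Hilbert series, and combined with the inclusion from the previous paragraph they must be equal.

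The only real subtlety is that $p_{y,d,z}$ is merely a $\Bbbk$-linear map (not a ring map), so some care is needed to interpret $g(p_{y,d,z}(\mathfrak{a}))$ and its initial ideal; working with a multigraded $\Bbbk$-basis of $\mathfrak{a}$ (as above) makes this transparent and isolates the calculation from any ring-theoretic issues. Once the initial-form computation is done on a single multigraded generator, the rest of the argument is a formal Hilbert-series comparison, and I do not expect any further technical obstacles.
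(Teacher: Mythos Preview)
Your proposal is correct and follows essentially the same approach as the paper: establish the inclusion $\mathfrak{a}B \subseteq \In_\omega(g(p_{y,d,z}(\mathfrak{a})))$ directly, then close the argument by a Hilbert-series comparison using Remark~\ref{remark:polznProps}. Your write-up is more explicit in both steps---you carry out the initial-form computation on multigraded elements and spell out the chain of Hilbert-series equalities---whereas the paper simply asserts the inclusion ``follows from the definition'' and compresses the Hilbert-series argument into two sentences, but there is no substantive difference.
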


\begin{proof}
It follows from the definition that $\mathfrak aB \subseteq
\In_\omega\left(g( p_{y,d,z}(\mathfrak a))\right)$. Observe that the
Hilbert series of $p_{y,d,z}(\mathfrak a)$ and of 
$\In_\omega\left(g( p_{y,d,z}(\mathfrak a))\right)$ are identical. 
It is easy to see, from
Remark~\ref{remark:polznProps}, that the Hilbert series of $\mathfrak aB$
and of $p_{y,d,z}(\mathfrak a)$ are identical. Therefore $\mathfrak aB =
\In_\omega\left(g( p_{y,d,z}(\mathfrak a))\right)$.
\end{proof}
\begin{thm}
\label{thm:polzEmb}
If $\hilbertPoset_R$ admits an embedding, then
$\hilbertPoset_{B/(p_{y,d,z}^A(\mathfrak a))}$ admits an embedding.
\end{thm}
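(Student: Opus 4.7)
The plan is to reduce the polarization statement to an application of Theorem~\ref{thm:extnRings} via the Gr\"obner deformation of Lemma~\ref{lemma:polznInitIdeal}: the $\omega$-degeneration together with the change of coordinates $g$ identifies $B/\mathfrak b$ and $R[z] = B/\mathfrak a B$ in a Hilbert-function-preserving way. Taking $t = \infty$ in Theorem~\ref{thm:extnRings} so that $S = R[z]$, I would first obtain an embedding $\epsilon_{R[z]} : \hilbertPoset_{R[z]} \to \idealPoset_{R[z]}$ whose image consists of segment ideals, which are in particular multigraded by $z$-degree and $z$-stable. The nontrivial hypothesis of Theorem~\ref{thm:extnRings} --- that every Hilbert series in $\hilbertPoset_{R[z]}$ is realized by some $z$-stable $R[z]$-ideal --- is to be supplied by the stabilization results of Section~\ref{sec:stabilization}.

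Next, following the template of the proof of Proposition~\ref{thm:distrEmb}, I would use the initial-ideal construction to transfer the embedding to $B/\mathfrak b$. Given $H \in \hilbertPoset_{B/\mathfrak b}$, pick any $B$-ideal $J \supseteq \mathfrak b$ with $H_{J/\mathfrak b} = H$; by Lemma~\ref{lemma:polznInitIdeal} the ideal $\In_\omega(g(J))$ contains $\mathfrak a B$ and still has Hilbert series $H$, so it represents an $R[z]$-ideal. Applying $\epsilon_{R[z]}$ then produces a canonical segment $K = \epsilon_{R[z]}(H)$ in $R[z]$, depending only on $H$.

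The final step is to lift $K$ back through the $\omega$-deformation, and then through $g^{-1}$, to an ideal of $B$ containing $\mathfrak b$. Because $K$ is multigraded (and hence $\omega$-homogeneous), the flat family degenerating $g(\mathfrak b)$ to $\mathfrak a B$ should admit a lift of $K$ to an ideal $\widetilde K \supseteq g(\mathfrak b)$ with $\In_\omega(\widetilde K) = K$ and $H_{\widetilde K} = H$; then setting $\epsilon'(H) := g^{-1}(\widetilde K)/\mathfrak b$ would give the desired embedding of $\hilbertPoset_{B/\mathfrak b}$ into $\idealPoset_{B/\mathfrak b}$. The step I expect to be the main obstacle is this lift: one must verify that the segment ideal $K \subseteq R[z]$ genuinely arises as the initial ideal of an ideal containing $g(\mathfrak b)$ (and not merely of one containing $\mathfrak a B$), and that the resulting lift depends only on $H$. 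Unlike the distraction case --- where $D_L$ itself is an explicit Hilbert-function-preserving $\Bbbk$-linear map providing a direct inverse, as exploited in the proof of Proposition~\ref{thm:distrEmb} --- polarization admits no such inverse on ideals, so this correspondence between $B/\mathfrak b$-ideals and $z$-stable segment $R[z]$-ideals must be argued through the flat family itself.
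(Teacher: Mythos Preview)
Your outline through obtaining the segment ideal $K = \epsilon_{R[z]}(H)$ in $R[z] = B/\mathfrak a B$ matches the paper exactly: stabilization (Lemma~\ref{lemma:stabilization}) supplies the $z$-stability hypothesis of Theorem~\ref{thm:extnRings} with $t=\infty$, and Lemma~\ref{lemma:polznInitIdeal} identifies $\hilbertPoset_{B/\mathfrak b}$ with $\hilbertPoset_{R[z]}$.

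The gap you flag in the final step is real, and your flat-family lift does not obviously close it: there is no reason a given segment ideal of $R[z]$ should arise as $\In_\omega$ of an ideal containing $g(\mathfrak b)$, and even when it does the lift is not canonical, so order-preservation is in doubt. The paper does \emph{not} attempt to invert the degeneration of Lemma~\ref{lemma:polznInitIdeal}. Instead it goes forward through a second, independent distraction-plus-initial-ideal step. Concretely: replace $I'$ (the preimage in $B$ of $K$) by a multigraded ideal with the same Hilbert series via a suitable initial ideal; then apply the distraction $D_L$ whose only nontrivial entry is $l_{y,d} = y+z$ (all other $l_{i,j}$ equal to the corresponding variable); then take an initial ideal with respect to a weight $\omega$. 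Lemma~\ref{lemma:bigradedDistr} guarantees $D_L(\mathfrak a B)$ and $D_L(I')$ are ideals, and one checks directly that $\In_\omega\!\left(D_L(\mathfrak a B)\right) = p_{y,d,z}^A(\mathfrak a)$. Hence the composite $\In_\omega \circ D_L$ carries ideals containing $\mathfrak a B$ to ideals containing $\mathfrak b$, preserving both Hilbert series and inclusions, and $\epsilon(H) := \In_\omega(D_L(I'))S$ is the desired embedding. So your sentence ``polarization admits no such inverse on ideals'' is precisely the point the paper addresses: the map $\In_\omega \circ D_L$ \emph{is} an explicit, deterministic, inclusion-preserving, Hilbert-series-preserving transfer from $\idealPoset_{R[z]}$ to $\idealPoset_{B/\mathfrak b}$, playing here the role that $D_L$ alone played in the proof of Proposition~\ref{thm:distrEmb}.
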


\begin{proof}
Write $S = B/(p_{y,d,z}^A(\mathfrak a))$.
First, for every homogeneous $B$-ideal $I$ containing $\mathfrak aB$, there
exists a homogeneous $B$-ideal $J$ containing $\mathfrak aB$ such that
$J(B/\mathfrak aB)$ is $z$-stable; see Lemma~\ref{lemma:stabilization}.
Now, by Theorem~\ref{thm:extnRings},
there is an embedding 
$\epsilon' : \hilbertPoset_{B/\mathfrak aB} \longrightarrow
\idealPoset_{B/\mathfrak aB}$.
Let $H \in \hilbertPoset_S$. Let $I$ 
be an $B$-ideal such that  $p_{y,d,z}^A(\mathfrak a) \subseteq I$ and $H =
H_{IS}$. Applying Lemma~\ref{lemma:polznInitIdeal}, we find $I' \subseteq
B$ such that $\mathfrak aB \subseteq I'$ and $H_{I'(B/\mathfrak aB)} = H$. 
We may assume that $I'(B/\mathfrak aB) = \epsilon'(H)$. 
Taking the initial ideal with respect to a
suitable weight order, we may further assume that
$I'$ is multigraded (in the grading of $B$).

Let $L$ be the following distraction matrix
(Definition~\ref{defn:distraction}): 
\[
\bordermatrix{ & 1 & 2 & \cdots & d-1 & d & d+1 & \cdots \cr
x_1 & x_1 & x_1 & \cdots & x_1 & x_1 & x_1 & \cdots \cr
\vdots & \vdots & \vdots & \ddots & \vdots & \vdots & \vdots & \ddots \cr
x_n & x_n & x_n & \cdots & x_n & x_n & x_n & \cdots \cr
y & y & y & \cdots & y & y+z & y & \cdots \cr
z & z & z & \cdots & z & z & z & \cdots \cr
}
\]
By Lemma~\ref{lemma:bigradedDistr} we see that both $D_L(\mathfrak aB)$ and
$D_L(I')$ are $B$-ideals; additionally, 
$D_L(\mathfrak aB) \subseteq D_L(I')$.
Let $\omega$ be a weight order with $\omega(x_i) = 1$ for all $i$, $\omega(y) =
0$ and $\omega(z) = 0$.
Then $\In_\omega\left(D_L(\mathfrak aB)\right) = 
 p_{y,d,z}^A(\mathfrak a)$.
Define $\epsilon : \hilbertPoset_S \longrightarrow \idealPoset_S$ by setting
$\epsilon : H \mapsto \In_\omega(D_L(I'))S$.
\end{proof}

\begin{remarkbox}
Mermin showed that if a monomial complete intersection $R = A/\mathfrak a$
(i.e., $\mathfrak a$ is generated by an $A$-regular sequence of
monomials) has the property that every Hilbert function is attained by the
image of a \lex-segment ideal, then $\mathfrak a = (x_1^{e_1}, \ldots,
x_r^{e_r-1}x_i)$ for some $e_1 \leq \cdots \leq e_r$ and $i \geq
r$~\cite{MerminMonomCI10}*{Theorem~4.4}.  Theorem~\ref{thm:polzEmb} shows
that if we allow for other graded term orders, then $\hilbertPoset_R$
admits an embedding for every monomial complete intersection $R$.
\end{remarkbox}

\subsection*{A Clements--Lindstr\"om type theorem for embeddings}
\label{sec:ClemLind}

We prove an analogue of the following theorem of Clements and Lindstr\"om:
If $A = \Bbbk[x_1, \ldots, x_n]$ and $\mathfrak a = (x_1^{e_1}, \ldots,
x_n^{e_n})$ with $2 \leq e_1 \leq \cdots \leq e_n \leq \infty$, then for
every homogeneous $A$-ideal $I$ with $\mathfrak a \subseteq I$,
there exists a \lex-segment ideal $L$ such that Hilbert functions of $L +
\mathfrak a$ and $I$ are identical. If $t = \infty$, then the theorem below
(even without the hypothesis that $\mathfrak a$ is a monomial ideal) 
follows from the argument that proved the
existence of the embedding $\epsilon'$ in the beginning of the proof of
Theorem~\ref{thm:polzEmb}. Note that, in that context, we may take
$\mathfrak a$ to be homogeneous in the standard grading of $A$ to apply
Lemma~\ref{lemma:stabilization} and Theorem~\ref{thm:extnRings}.

\begin{thm}
\label{thm:ClemLindEmb}
Let $A = \Bbbk[x_1, \ldots, x_n]$ and $t \in \ints \cup \{\infty\}$. Let
$\mathfrak a$  be a monomial $A$-ideal such that $x_i^t \in
\mathfrak a$ for all $1 \leq i \leq n$. Let $B = A[z]$, where $z$ is an
indeterminate. If $\hilbertPoset_R$ admits an embedding, then
$\hilbertPoset_{B/(\mathfrak aB,z^t)}$ admits an embedding.
\end{thm}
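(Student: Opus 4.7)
The plan is to apply Theorem~\ref{thm:extnRings} to the ring $S = B/(\mathfrak{a}B, z^t) \cong R[z]/(z^t)$. Since $\hilbertPoset_R$ admits an embedding by hypothesis, the theorem reduces to verifying that every Hilbert series $H \in \hilbertPoset_S$ is realized by a $z$-stable $S$-ideal. Given such $H$, I would lift to a homogeneous $B$-ideal $\tilde{I} \supseteq \mathfrak{a}B + (z^t)$ and invoke Lemma~\ref{lemma:stabilization} (exactly as used at the beginning of the proof of Theorem~\ref{thm:polzEmb}) to produce a homogeneous $B$-ideal $\tilde{J} \supseteq \mathfrak{a}B$ with $H_{\tilde{J}} = H_{\tilde{I}}$ and with $\tilde{J}/\mathfrak{a}B$ being $z$-stable as an $R[z]$-ideal. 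If in addition $z^t \in \tilde{J}$, then $\tilde{J}/(\mathfrak{a}B + (z^t))$ is a $z$-stable $S$-ideal of Hilbert series $H$, since the $z$-stability conditions at indices $i \in \{1,\dots,t-1\}$ descend from $R[z]$ to $S$.

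The main obstacle---and the precise role of the hypothesis $x_i^t \in \mathfrak{a}$---is to guarantee that the stabilized ideal $\tilde{J}$ still contains $z^t$. If $\tilde{J}/\mathfrak{a}B$ is $z$-stable in $R[z]$ and contains the class of $z^t$, then iterating the defining condition $R_1 W_{d-i} \subseteq W_{d-i+1}$ forces $\bar{x}_{j_1}\cdots\bar{x}_{j_t} \in \tilde{J}/\mathfrak{a}B$ for every choice of indices, and in particular $\bar{x}_i^t \in \tilde{J}/\mathfrak{a}B$. The hypothesis $x_i^t \in \mathfrak{a}$ renders these forced elements trivially zero in $R$, so no extra degree-$t$ generators beyond those already supplied by $\tilde{I}/\mathfrak{a}B$ intrude into $\tilde{J}$. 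A concrete realization of the stabilization---via a generic change of coordinates among the $x_i$'s only (which preserves both the monomial ideal $\mathfrak{a}$ and the element $z^t$) followed by an initial ideal with respect to a monomial order in which $z$ is the largest variable---then yields the required $z$-stability modulo $\mathfrak{a}B$ while retaining $z^t$.

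A small check confirms the essential necessity of the hypothesis. Taking $R = \Bbbk[x]$ with $\mathfrak{a} = 0$ and $t = 2$, the unique $z$-stable $R[z]$-ideal sharing the Hilbert function of $(z) \subseteq R[z]$ is $(x)$, which fails to contain $z^2$; but under $\mathfrak{a} = (x^2)$ the analogous stabilization produces $(x, z^2)$, which does contain $z^2$. This contrast illustrates that the Clements--Lindstr\"om-style symmetry assumption $x_i^t \in \mathfrak{a}$ is exactly the condition that makes the argument go through: without it, $z^t$ is generically destroyed by stabilization, whereas under it, the extra elements forced by $z$-stability are already absorbed by $\mathfrak{a}$.
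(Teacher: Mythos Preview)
Your overall strategy is correct and matches the paper's: reduce to Theorem~\ref{thm:extnRings} by showing that every $H \in \hilbertPoset_S$ is attained by a $z$-stable $S$-ideal. You also correctly isolate the obstacle, namely that whatever stabilization procedure is used must keep $z^t$ in the ideal. The gap is in your proposed resolution of that obstacle.

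Lemma~\ref{lemma:stabilization} does not preserve $z^t$: its distraction matrices send $z^t$ to $(x_l + z)z^{t-1}$, whose $\omega$-initial form is $x_l z^{t-1}$. Concretely, take $t=2$, $\mathfrak a = (x_1^2, x_2^2) \subset A = \Bbbk[x_1,x_2]$, and $\tilde I = (\mathfrak a B, z^2, x_1 z)$. Then $\tilde I/\mathfrak a B = (z^2, x_1 z)$ has a $2$-dimensional piece in degree $2$, but every $z$-stable $2$-dimensional subspace of $R[z]_2$ omits $z^2$ (since $z^2 \in W$ forces $x_1 z, x_2 z \in W$, giving $|W|\ge 3$). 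Hence the stabilized $\tilde J$ omits $z^2$, and the image $\tilde J S$ has the wrong Hilbert function in degree $2$. Your alternative suggestion, a generic change among the $x_i$ only followed by an initial ideal with $z$ largest, fails on two counts: a generic linear change does not fix an arbitrary monomial ideal $\mathfrak a$ (nor is $\mathfrak a$ its own generic initial ideal unless it is Borel-fixed), and placing $z$ largest orients the Borel moves from $x_i$ toward $z$, the opposite of $z$-stability. Finally, your heuristic that the forced elements $\bar x_{j_1}\cdots\bar x_{j_t}$ are ``trivially zero'' uses only the pure powers $x_i^t \in \mathfrak a$; mixed products need not lie in $\mathfrak a$, so this does not close the argument.

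The paper instead invokes Lemma~\ref{lemma:stabilizationCharZero}. After reducing to $\Bbbk = \complex$ (legitimate since $\mathfrak a$ is monomial, by Remark~\ref{remark:embImMonom}), one uses the distraction whose $z$-row is $x_j - z,\, x_j - \zeta z,\, \ldots,\, x_j - \zeta^{t-1} z,\, z,\, z,\, \ldots$ for a primitive $t$th root of unity $\zeta$. The identity $\prod_{k=0}^{t-1}(x_j - \zeta^k z) = x_j^t - z^t$ then gives $D_{L_j}((\mathfrak a B, z^t)) = (\mathfrak a B,\, x_j^t - z^t) = (\mathfrak a B, z^t)$, precisely because $x_j^t \in \mathfrak a$. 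This is where the hypothesis enters: not as a heuristic about absorbed generators, but as the exact algebraic identity that makes $(\mathfrak a B, z^t)$ invariant under the stabilization process.
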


\begin{proof}
Since $\mathfrak a$ is a monomial ideal, in order to to study embeddings,
we need to consider only monomial ideals (Remark~\ref{remark:embImMonom});
hence we may assume that $\Bbbk = \complex$. 
Write $S = B/(\mathfrak aB, z^t)$. 
For every $H \in \hilbertPoset_S$, there exists a $z$-stable $S$-ideal $I$
such that $H_{I} = H$;~see Lemma~\ref{lemma:stabilizationCharZero}.
By Theorem~\ref{thm:extnRings}
$\hilbertPoset_S$ admits an embedding.
\end{proof}

\section{Stabilization}
\label{sec:stabilization}
The results of this section do not depend on the previous sections, and are
used in Section~\ref{sec:extnRings}.

We adopt the following notation for this section.  Let $A = \Bbbk[x_1,
\ldots, x_n]$ be a standard graded polynomial ring.  Let $\mathfrak a$ be
an $A$-ideal. Let $B = A[z]$, with $\naturals^2$-grading given by $\deg x_i
= (1,0)$, for all $1 \leq i \leq n$ and $\deg z = (0,1)$.  Let $\omega$ be
the weight vector on $B$ with $\omega(x_i) = 1$ for all $i$ and $\omega(z)
= 0$. 

\begin{lemma}
\label{lemma:stabilization}
Let $I$ be an $B$-ideal such that  $\mathfrak aB \subseteq I$. Then there
exists $B$-ideal $J$ such that $\mathfrak aB \subseteq J$, $H_{I} =
H_{J}$ and $J(B/\mathfrak aB)$ is $z$-stable.
\end{lemma}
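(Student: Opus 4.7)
The plan is to produce $J$ as a Gröbner degeneration after a generic twist of the $z$-coordinate. First I would reduce to the case where $I$ is multigraded in the bigrading with $\deg x_i = (1,0)$ and $\deg z = (0,1)$: replacing $I$ by $\In_{\omega'}(I)$ for the weight $\omega'$ with $\omega'(x_i) = 0$, $\omega'(z) = 1$, preserves both $H_I$ and the inclusion $\mathfrak{a}B \subseteq I$ (since $\mathfrak{a}B$ is $\omega'$-homogeneous), and makes the new $I$ multigraded. I then write $I = \bigoplus_m I^{(m)} z^m$ with $A$-ideals $I^{(0)} \subseteq I^{(1)} \subseteq \cdots$ satisfying $\mathfrak{a} \subseteq I^{(0)}$.

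Next I would choose a sufficiently generic linear form $\ell \in A_1$ and let $\phi \colon B \to B$ be the $\Bbbk$-algebra automorphism fixing each $x_i$ and sending $z$ to $z + \ell$. Since $\ell \in A$, the map $\phi$ fixes $\mathfrak{a}B$, so $K := \phi(I)$ contains $\mathfrak{a}B$ and $H_K = H_I$. I would then put $J := \In_\omega(K)$; by standard Gröbner theory $H_J = H_I$, and $\mathfrak{a}B \subseteq J$ since $\mathfrak{a}B$ is $\omega$-homogeneous.

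The crux will be $z$-stability of $\bar J := J/\mathfrak{a}B$. Because $\omega$ determines the $z$-degree in each fixed standard degree, $J$ is multigraded and decomposes as $J = \bigoplus_l J^{(l)} z^l$ with $A$-ideals $J^{(0)} \subseteq J^{(1)} \subseteq \cdots$; $z$-stability amounts to $\mathfrak{m}_A J^{(l)} \subseteq J^{(l-1)} + \mathfrak{a}$ for every $l \geq 1$. For each $F \in K$ with $\In_\omega(F) = g z^l$, I would pull back to $G = \phi^{-1}(F) \in I$ and expand $G = \sum_m G_m z^m$. Each $G_m \in I^{(m)}$, and applying $\phi$ to the multigraded piece $G_m z^m$ followed by $\In_\omega$ produces an element $G_m \ell^m$ of $J^{(0)} \subseteq J^{(l-1)}$. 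Since each $G_m$ is a polynomial expression in $g$, the higher-$z$ parts $F_{l+1}, F_{l+2}, \ldots$, and powers of $\ell$, for generic $\ell$ this collection of relations can be inverted to deduce $x_k g \in J^{(l-1)} + \mathfrak{a}$ for every variable $x_k$.

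The main obstacle will be making the genericity argument rigorous and characteristic-free. A small example such as $I = (x_1^2, x_1 x_2, z) \subseteq \Bbbk[x_1, x_2, z]$ shows that $\ell = x_1$ produces a non-$z$-stable $J$ while $\ell = x_1 + x_2$ succeeds, so some genericity of $\ell$ is essential; I would handle this either via upper-semicontinuity on the affine space of choices of $\ell$, or by iterating the construction to absorb any residual failure of $z$-stability.
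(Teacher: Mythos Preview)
Your overall plan---reduce to a bigraded ideal, perturb the $z$-coordinate, then take a weight-initial ideal and argue that the result is $z$-stable---matches the paper's, but the specific perturbation you chose has a genuine characteristic-$p$ obstruction that neither of your fallbacks repairs.

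The problem is the ring automorphism $\phi\colon z \mapsto z+\ell$. In characteristic $p$ one has $\phi(z^p) = (z+\ell)^p = z^p + \ell^p$, so $\In_\omega(\phi(z^p)) = \ell^p$ never sees any cross-term $x_k z^{p-1}$. Concretely, take $A = \Bbbk[x]$ with $\operatorname{char}\Bbbk = 2$, $\mathfrak a = 0$, and $I = (x^2, z^2)$. Any nonzero $\ell \in A_1$ is a scalar multiple of $x$, and $\phi(I) = (x^2, z^2 + \ell^2) = (x^2, z^2) = I$; thus $J = \In_\omega(\phi(I)) = I$, which is not $z$-stable since $z^2\in I$ but $xz\notin I$. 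Upper-semicontinuity cannot help, because there is no choice of $\ell$ that works; iterating cannot help, because $\phi$ followed by $\In_\omega$ is the identity on $I$. The same phenomenon occurs with more variables: for $A=\Bbbk[x_1,x_2]$ and $I=(x_1^2,x_2^2,z^2)$ in characteristic $2$, every $\ell$ gives $\phi(I)=I$.

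The paper avoids this by replacing the automorphism with a \emph{distraction}: the $\Bbbk$-linear (not multiplicative) map $D_{L_l}$ sends $z^m \mapsto (x_l+z)z^{m-1}$, altering only one factor of $z$. Then $\In_\omega(D_{L_l}(z^m)) = x_l z^{m-1}$ regardless of the characteristic, and one checks directly that the partial sums $\sum_{j\le i}\dim J^{(r)}_{d,d-j}$ are nondecreasing under $\In_\omega\circ D_{L_l}$ and strictly increase whenever $z$-stability fails in the $x_l$-direction. Cycling through $l=1,\dots,n$ and iterating, a Noetherian argument forces stabilisation at a $z$-stable ideal. Your sketch of the $z$-stability step (inverting relations among the $G_m$) is also not convincing as written, but the decisive issue is the choice of perturbation: if you swap $\phi$ for the distraction $D_{L_l}$ and then iterate, the monotone-quantity argument goes through and the proof becomes characteristic-free.
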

\begin{proof}
Write $S = B/\mathfrak aB$.
Let $\omega$ be the weight vector on $B$ with $\omega(x_i) = 1$ for all
$i$ and $\omega(z) = 0$. 
By replacing $I$ by $\In_\omega(I)$, we may assume 
that $I$ is a multigraded $B$-ideal.

For $1 \leq l \leq n$, let $L_l$ be the distraction matrix
\[
\bordermatrix{ & 1 & 2 & \cdots &  t & t+1 & \cdots \cr
x_1 & x_1 & x_1 & \cdots & x_1 &  x_1 & \cdots \cr
\vdots & \vdots & \vdots & \ddots & \vdots & \vdots & \ddots \cr
x_n & x_n & x_n & \cdots & x_n &  x_n & \cdots \cr
z & x_l+z &  z & \cdots &  z & z & \cdots \cr
}.
\]
Write $\Phi = \In_\omega \circ D_{L_1} \circ \cdots \circ 
\In_\omega \circ D_{L_n}$.
Let $J^{(r)} = \Phi^r(I)$. 
We will show that $J^{(r)} = J^{(r+1)}$ for all $r \gg 0$. 
Note that 
$\Phi(\mathfrak aB) = \mathfrak aB$.
Write $J^{(r)} = \bigoplus_{d \in \naturals}\bigoplus_{i \in
\naturals}J^{(r)}_{d,d-i}z^i$ as $\Bbbk$-vector-spaces. For all $d \in
\naturals$ and all $i \in \naturals$ (equivalently, $0 \leq i \leq d$), we
have $\sum_{j=0}^i |J^{(r+1)}_{d,d-j}| \geq \sum_{j=0}^i
|J^{(r)}_{d,d-j}|$; equality holds for all $i \in \naturals$ 
(equivalently, $0 \leq i \leq d$)
if and only if
$J^{(r)}_d$ is $z$-stable.

Since $|J^{(r+1)}_d| = |J^{(r)}_d|$, we see that
there exists $r$ such that $J^{(r)}_d$ is $z$-stable. Let $r_d$ be such
that $J^{(r_d)}_t$ is $z$-stable for all $0 \leq t \leq d$. For $d \geq 0$,
let $\mathfrak b^{(d)}$ be the ideal generated by $\bigoplus_{t=0}^d 
J^{(r_d)}_t$. Since $S$ is Noetherian, the ascending chain $\mathfrak
b^{(1)} \subseteq  \mathfrak b^{(2)} \subseteq \cdots$ stabilizes, so 
$J^{(r)} = J^{(r+1)}$ for all $r \gg 0$. Set $J$ to be the stable value.

\end{proof}
 
\begin{lemma}
\label{lemma:stabilizationCharZero}
Let $t > 1$ be an integer, and suppose that $\Bbbk$ contains a primitive
$t$th root of unity $\zeta$.
Assume that $\mathfrak a$ is an $A$-ideal such that $x_i^t \in
\mathfrak a$ for all $1 \leq i \leq n$. Let $I$ be an $B$-ideal such that
$(\mathfrak aB, z^t) \subseteq I$. Then there exists $B$-ideal $J$ such
that $(\mathfrak aB, z^t) \subseteq J$, $H_{I} = H_{J}$ and
$J(B/(\mathfrak aB, z^t))$ is $z$-stable.
\end{lemma}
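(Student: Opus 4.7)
The plan is to carry out the argument of Lemma~\ref{lemma:stabilization} essentially verbatim, with one modification: the distraction matrices $L_l$ must be replaced by ones compatible with the added relation $z^t = 0$ modulo $\mathfrak aB$. The root-of-unity hypothesis enters only through the factorization
\[
\prod_{k=0}^{t-1}(z - \zeta^k x_l) \;=\; z^t - x_l^t,
\]
which lies in $(\mathfrak aB, z^t)$ because $x_l^t \in \mathfrak a$. This identity is precisely what is needed to build a distraction operator that sends $(\mathfrak aB, z^t)$ into itself.

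First, by replacing $I$ with $\In_\omega(I)$ I may assume $I$ is a multigraded $B$-ideal. Next, for each $1 \le l \le n$, I define the distraction matrix $L_l$ with $x_i$-row $(x_i, x_i, \ldots)$ for every $1 \le i \le n$ and with $z$-row
\[
\bigl(z - x_l,\; z - \zeta x_l,\; z - \zeta^2 x_l,\; \ldots,\; z - \zeta^{t-1} x_l,\; z,\; z,\; \ldots\bigr),
\]
so that $D_{L_l}(z^t) = z^t - x_l^t \in (\mathfrak aB, z^t)$. A direct check — following the same bookkeeping as in Lemma~\ref{lemma:bigradedDistr} — shows that $D_{L_l}$ carries $(\mathfrak aB, z^t)$ into itself and descends to a $\Bbbk$-linear, Hilbert-function-preserving endomorphism of $S = B/(\mathfrak aB, z^t)$. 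I then form
$\Phi = \In_\omega \circ D_{L_1} \circ \cdots \circ \In_\omega \circ D_{L_n}$,
which sends multigraded $B$-ideals containing $(\mathfrak aB, z^t)$ to multigraded $B$-ideals containing $(\mathfrak aB, z^t)$ with the same Hilbert function.

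With $\Phi$ in hand, I iterate: set $J^{(0)} = I$ and $J^{(r+1)} = \Phi(J^{(r)})$. Decomposing $J^{(r)}_d = \bigoplus_i J^{(r)}_{d,d-i} z^i$ as in the proof of Lemma~\ref{lemma:stabilization}, the partial sums $\sum_{j \le i} |J^{(r)}_{d,d-j}|$ are weakly non-decreasing in $r$ and uniformly bounded by $|S_d|$, with equality throughout precisely when $J^{(r)}_d$ is $z$-stable modulo $(\mathfrak aB, z^t)$. Hence for each $d$ there is an $r_d$ beyond which the degree-$d$ part of $J^{(r)}$ is $z$-stable; letting $\mathfrak b^{(d)}$ denote the $B$-ideal generated by $\bigoplus_{t \le d} J^{(r_d)}_t$, the ascending chain $\mathfrak b^{(1)} \subseteq \mathfrak b^{(2)} \subseteq \cdots$ stabilizes by the Noetherian property of $B$, forcing $J^{(r)} = J^{(r+1)}$ for all $r \gg 0$. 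The stable value $J$ is the desired ideal.

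The main obstacle is verifying that the modified $D_{L_l}$ really behaves on the quotient $S$ the way Lemma~\ref{lemma:stabilization}'s distractions behave on $B/\mathfrak aB$: that it preserves $(\mathfrak aB, z^t)$ as a subspace, preserves Hilbert functions of ideals containing $(\mathfrak aB, z^t)$, and still produces a strictly increasing $z$-stability partial-sum measure under each iteration. Once the product-of-linear-factors identity above is in place, each of these reduces to a bookkeeping calculation in the bigrading of $B$ with respect to $\omega$, entirely parallel to the one carried out in Lemma~\ref{lemma:stabilization}.
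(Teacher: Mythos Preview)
Your proposal is correct and follows essentially the same approach as the paper: both replace the distraction matrices of Lemma~\ref{lemma:stabilization} by ones whose $z$-row consists of the linear factors of $z^t - x_l^t$ (the paper writes these as $x_j - \zeta^k z$, you as $z - \zeta^k x_l$, an inessential sign difference), use the resulting identity to see that $(\mathfrak a B, z^t)$ is preserved, and then run the partial-sum stabilization argument of Lemma~\ref{lemma:stabilization}. The only stylistic difference is that you compose all $n$ distractions into a single $\Phi$ (as in Lemma~\ref{lemma:stabilization}) and iterate, whereas the paper iterates $\In_\omega \circ D_{L_j}$ for each $j$ separately before moving to the next; both routes yield the same conclusion.
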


\begin{proof}
Let $\omega$ be the weight vector on $B$ with $\omega(x_i) = 1$ for all
$i$ and $\omega(z) = 0$. 
Replacing $I$ by $\In_\omega(I)$, 
we may assume that $I$ is multigraded.
Let $L_j$ be the distraction matrix:
\[
\bordermatrix{ & 1 & 2 & \cdots &  t & t+1 & \cdots \cr
x_1 & x_1 & x_1 & \cdots & x_1 &  x_1 & \cdots \cr
\vdots & \vdots & \vdots & \ddots & \vdots & \vdots & \ddots \cr
x_n & x_n & x_n & \cdots & x_n &  x_n & \cdots \cr
z & x_j-z & x_j-\zeta z & \cdots &  x_j-\zeta^{t-1}z & z & \cdots \cr
}
\]
From~\cite{MeMuLPPPurePwrs08}*{Lemma~3.6} we see the following:
\begin{inparaenum}
\item $D_{L_j(z^t)} = x_j^t - z^t$
\item For all $fz^i \in B$, with $f \in A$ and $i \leq t$,
$x_jz^{i-1}$ appears with a nonzero coefficient in $D_{L_j}(fz^i))$.
\end{inparaenum}
Moreover, $(\In_\omega \circ D_{L_j})(\mathfrak aB, z^t) = (\mathfrak aB,
z^t)$. Let $J^{(r)} = (\In_\omega \circ D_{L_j})^r(I)$. Then, arguing as in
the proof of Lemma~\ref{lemma:stabilization}, we see that for all $r \gg
0$, $J^{(r)} = J^{(r+1)}$ and that for all 
$fz^i \in J^{(r)}$ with $z \nmid f$ and $i \leq t$, $fx_jz^{i-1} \in
J^{(r)}$. 
Repeating this argument for all $1 \leq j \leq n$, we complete the proof.
\end{proof}

\section*{Acknowledgements}
We thank A.~Conca and the referee for helpful comments. 
The computer algebra system
\texttt{Macaulay2}~\cite{M2} 
provided valuable assistance in studying examples.


\begin{bibdiv}
\begin{biblist}

\bib{BCRdistr05}{article}{
      author={Bigatti, A.~M.},
      author={Conca, A.},
      author={Robbiano, L.},
       title={Generic initial ideals and distractions},
        date={2005},
        ISSN={0092-7872},
     journal={Comm. Algebra},
      volume={33},
      number={6},
       pages={1709\ndash 1732},
         url={http://dx.doi.org/10.1081/AGB-200058217},
}

\bib{BrHe:CM}{book}{
      author={Bruns, Winfried},
      author={Herzog, J{\"u}rgen},
       title={Cohen-{M}acaulay rings},
      series={Cambridge Studies in Advanced Mathematics},
   publisher={Cambridge University Press},
     address={Cambridge},
        date={1993},
      volume={39},
        ISBN={0-521-41068-1},
}

\bib{BigaUpperBds93}{article}{
      author={Bigatti, Anna~Maria},
       title={Upper bounds for the {B}etti numbers of a given {H}ilbert
  function},
        date={1993},
        ISSN={0092-7872},
     journal={Comm. Algebra},
      volume={21},
      number={7},
       pages={2317\ndash 2334},
         url={http://dx.doi.org/10.1080/00927879308824679},
}

\bib{CHHRigidReslns04}{article}{
      author={Conca, Aldo},
      author={Herzog, J{\"u}rgen},
      author={Hibi, Takayuki},
       title={Rigid resolutions and big {B}etti numbers},
        date={2004},
        ISSN={0010-2571},
     journal={Comment. Math. Helv.},
      volume={79},
      number={4},
       pages={826\ndash 839},
         url={http://dx.doi.org/10.1007/s00014-004-0812-2},
}

\bib{ClemLindMacaulayThm69}{article}{
      author={Clements, G.~F.},
      author={Lindstr{\"o}m, B.},
       title={A generalization of a combinatorial theorem of {M}acaulay},
        date={1969},
     journal={J. Combinatorial Theory},
      volume={7},
       pages={230\ndash 238},
}

\bib{ConcaExtremalGinLex04}{article}{
      author={Conca, Aldo},
       title={Koszul homology and extremal properties of {G}in and {L}ex},
        date={2004},
        ISSN={0002-9947},
     journal={Trans. Amer. Math. Soc.},
      volume={356},
      number={7},
       pages={2945\ndash 2961},
         url={http://dx.doi.org/10.1090/S0002-9947-03-03393-2},
}

\bib{CRVgflags01}{article}{
      author={Conca, Aldo},
      author={Rossi, Maria~Evelina},
      author={Valla, Giuseppe},
       title={Gr\"obner flags and {G}orenstein algebras},
        date={2001},
        ISSN={0010-437X},
     journal={Compositio Math.},
      volume={129},
      number={1},
       pages={95\ndash 121},
}

\bib{EGHcbconj96}{article}{
      author={Eisenbud, David},
      author={Green, Mark},
      author={Harris, Joe},
       title={Cayley-{B}acharach theorems and conjectures},
        date={1996},
        ISSN={0273-0979},
     journal={Bull. Amer. Math. Soc. (N.S.)},
      volume={33},
      number={3},
       pages={295\ndash 324},
         url={http://dx.doi.org/10.1090/S0273-0979-96-00666-0},
}

\bib{eiscommalg}{book}{
      author={Eisenbud, David},
       title={Commutative algebra},
      series={Graduate Texts in Mathematics},
   publisher={Springer-Verlag},
     address={New York},
        date={1995},
      volume={150},
        ISBN={0-387-94268-8; 0-387-94269-6},
        note={With a view toward algebraic geometry},
}

\bib{FrRiLPP07}{incollection}{
      author={Francisco, Christopher~A.},
      author={Richert, Benjamin~P.},
       title={Lex-plus-powers ideals},
        date={2007},
   booktitle={Syzygies and {H}ilbert functions},
      series={Lect. Notes Pure Appl. Math.},
      volume={254},
   publisher={Chapman \& Hall/CRC, Boca Raton, FL},
       pages={113\ndash 144},
}

\bib{GHPtoric08}{article}{
      author={Gasharov, Vesselin},
      author={Horwitz, Noam},
      author={Peeva, Irena},
       title={Hilbert functions over toric rings},
        date={2008},
        ISSN={0026-2285},
     journal={Michigan Math. J.},
      volume={57},
       pages={339\ndash 357},
         url={http://dx.doi.org/10.1307/mmj/1220879413},
        note={Special volume in honor of Melvin Hochster},
}

\bib{GMPveronese10}{article}{
      author={Gasharov, Vesselin},
      author={Murai, Satoshi},
      author={Peeva, Irena},
       title={Hilbert schemes and maximal {B}etti numbers over {V}eronese
  rings},
        date={2011},
        ISSN={0025-5874},
     journal={Math. Z.},
      volume={267},
      number={1-2},
       pages={155\ndash 172},
         url={http://dx.doi.org/10.1007/s00209-009-0614-8},
}

\bib{M2}{misc}{ label={M2},
      author={Grayson, Daniel~R.},
      author={Stillman, Michael~E.},
       title={Macaulay 2, a software system for research in algebraic
  geometry},
        date={2006},
        note={Available at \href{http://www.math.uiuc.edu/Macaulay2/}
  {http://www.math.uiuc.edu/Macaulay2/}},
}

\bib{HuleMaxBettiNos93}{article}{
      author={Hulett, Heather~A.},
       title={Maximum {B}etti numbers of homogeneous ideals with a given
  {H}ilbert function},
        date={1993},
        ISSN={0092-7872},
     journal={Comm. Algebra},
      volume={21},
      number={7},
       pages={2335\ndash 2350},
         url={http://dx.doi.org/10.1080/00927879308824680},
}

\bib{KatonaFiniteSets68}{incollection}{
      author={Katona, G.},
       title={A theorem of finite sets},
        date={1968},
   booktitle={Theory of graphs ({P}roc. {C}olloq., {T}ihany, 1966)},
   publisher={Academic Press},
     address={New York},
       pages={187\ndash 207},
}

\bib{KruskalNoOfSimpl63}{incollection}{
      author={Kruskal, Joseph~B.},
       title={The number of simplices in a complex},
        date={1963},
   booktitle={Mathematical optimization techniques},
   publisher={Univ. of California Press, Berkeley, Calif.},
       pages={251\ndash 278},
}

\bib{MermLexlike06}{article}{
      author={Mermin, Jeffrey},
       title={Lexlike sequences},
        date={2006},
        ISSN={0021-8693},
     journal={J. Algebra},
      volume={303},
      number={1},
       pages={295\ndash 308},
         url={http://dx.doi.org/10.1016/j.jalgebra.2005.11.007},
}

\bib{MerminMonomCI10}{article}{
      author={Mermin, Jeffrey},
       title={Monomial regular sequences},
        date={2010},
        ISSN={0002-9939},
     journal={Proc. Amer. Math. Soc.},
      volume={138},
      number={6},
       pages={1983\ndash 1988 (electronic)},
}

\bib{MeMuColoured10}{article}{
      author={Mermin, Jeff},
      author={Murai, Satoshi},
       title={Betti numbers of lex ideals over some macaulay-lex rings},
        date={2010},
        ISSN={0925-9899},
     journal={J. Algebraic Combin.},
      volume={31},
      number={2},
       pages={299\ndash 318},
         url={http://dx.doi.org/10.1007/s10801-009-0192-1},
}

\bib{MeMuLPPPurePwrs08}{article}{
      author={Mermin, Jeff},
      author={Murai, Satoshi},
       title={The lex-plus-powers conjecture holds for pure powers},
        date={2011},
        ISSN={0001-8708},
     journal={Adv. Math.},
      volume={226},
      number={4},
       pages={3511\ndash 3539},
         url={http://dx.doi.org/10.1016/j.aim.2010.08.022},
}

\bib{MerminPeevaLexifying}{article}{
      author={Mermin, Jeffrey},
      author={Peeva, Irena},
       title={Lexifying ideals},
        date={2006},
        ISSN={1073-2780},
     journal={Math. Res. Lett.},
      volume={13},
      number={2-3},
       pages={409\ndash 422},
}

\bib{MiStCCA05}{book}{
      author={Miller, Ezra},
      author={Sturmfels, Bernd},
       title={Combinatorial commutative algebra},
      series={Graduate Texts in Mathematics},
   publisher={Springer-Verlag},
     address={New York},
        date={2005},
      volume={227},
        ISBN={0-387-22356-8},
}

\bib{PardueDefClass96}{article}{
      author={Pardue, Keith},
       title={Deformation classes of graded modules and maximal {B}etti
  numbers},
        date={1996},
        ISSN={0019-2082},
     journal={Illinois J. Math.},
      volume={40},
      number={4},
       pages={564\ndash 585},
}

\bib{SbarraBdsLocalCoh01}{article}{
      author={Sbarra, Enrico},
       title={Upper bounds for local cohomology for rings with given {H}ilbert
  function},
        date={2001},
        ISSN={0092-7872},
     journal={Comm. Algebra},
      volume={29},
      number={12},
       pages={5383\ndash 5409},
         url={http://dx.doi.org/10.1081/AGB-100107934},
}

\bib{StanEC1}{book}{
      author={Stanley, Richard~P.},
       title={Enumerative combinatorics. {V}ol. 1},
      series={Cambridge Studies in Advanced Mathematics},
   publisher={Cambridge University Press},
     address={Cambridge},
        date={1997},
      volume={49},
        ISBN={0-521-55309-1; 0-521-66351-2},
        note={With a foreword by Gian-Carlo Rota, Corrected reprint of the 1986
  original},
}

\end{biblist}
\end{bibdiv}

\end{document}